\tikzset{double line with arrow/.style args={#1,#2}{decorate,decoration={markings,%
mark=at position 0 with {\coordinate (ta-base-1) at (0,-2pt);
\coordinate (ta-base-2) at (0,2pt);},
mark=at position 1 with {\draw[#1] (ta-base-1) -- (0,-2pt);
\draw[#2] (ta-base-2) -- (0,2pt);
}}}}
\DeclarePairedDelimiter\norm{\lVert}{\rVert}        
\DeclarePairedDelimiter\angles{\langle}{\rangle}    
\DeclarePairedDelimiter\paren{(}{)}            
\DeclarePairedDelimiter\braces{\{}{\}}            
    \newcommand{\bigparen}[1]{\paren[\big]{#1}}
\DeclareMathOperator{\id}{id}                
\DeclareMathOperator{\Ad}{Ad}
\DeclareMathOperator{\supp}{supp}            
\DeclareMathOperator{\im}{Im}                
\DeclareMathOperator{\SU}{SU}            
\DeclareMathOperator{\U}{U}
\DeclareMathOperator{\dd}{d}
\theoremstyle{plain}
\newtheorem{thm}{Theorem}[section]        \newtheorem{theorem}[thm]{Theorem}
            \newtheorem{lemma}[thm]{Lemma}
        \newtheorem{corollary}[thm]{Corollary}
\newtheorem*{thm*}{Theorem}            \newtheorem*{theorem*}{Theorem}
\newtheorem*{prop*}{Proposition}        \newtheorem*{proposition*}{Proposition}
\newtheorem*{lem*}{Lemma}            \newtheorem*{lemma*}{Lemma}
\newtheorem*{cor*}{Corollary}            \newtheorem*{corollary*}{Corollary}
\newtheorem*{qu*}{Question}            \newtheorem*{question*}{Question}
\newtheorem*{conj*}{Conjecture}            \newtheorem*{conjecture*}{Question}
\newtheorem*{prob*}{Problem}        \newtheorem*{problem*}{Problem}
\newtheorem*{fact*}{Fact}
\newtheorem*{claim*}{Claim}
\newtheorem*{case*}{Case}
\numberwithin{equation}{section}
\newtheorem{alphthm}{Theorem}            
\newtheorem{alphcor}[alphthm]{Corollary}           
\newtheorem{alphconj}[alphthm]{Conjecture}
\theoremstyle{definition}
        \newtheorem{definition}[thm]{Definition}
\newtheorem*{de*}{Definition}            \newtheorem{definition*}{Definition}
\newtheorem*{notation*}{Notation}
\newtheorem*{conv*}{Convention}            \newtheorem*{convention*}{Convention}
\theoremstyle{remark}
            \newtheorem{remark}[thm]{Remark}
            \newtheorem{example}[thm]{Example}
\newtheorem*{rmk*}{Remark}        \newtheorem*{remark*}{Remark}
\DeclareMathAlphabet{\mathbit}{OT1}{cmr}{bx}{it}
\crefname{thm}{Theorem}{Theorems}              \crefname{theorem}{Theorem}{Theorems}
\crefname{prop}{Proposition}{Propositions}     \crefname{proposition}{Proposition}{Propositions}
\crefname{lem}{Lemma}{Lemmas}                  \crefname{lemma}{Lemma}{Lemmas}
\crefname{rmk}{Remark}{Remarks}                \crefname{remark}{Remark}{Remarks}
\crefname{cor}{Corollary}{Corollaries}         \crefname{corollary}{Corollary}{Corollaries}
\crefname{qu}{Question}{Questions}             \crefname{question}{Question}{Questions}
\crefname{conj}{Conjecture}{Conjectures}       \crefname{conjecture}{Conjecture}{Conjectures}
\crefname{prob}{Problem}{Problems}             \crefname{problem}{Problem}{Problems}
\crefname{fact}{Fact}{Facts}
\crefname{claim}{Claim}{Claims}
\crefname{case}{Case}{Cases}
\crefname{alphthm}{Theorem}{Theorems}          \crefname{alphcor}{Corollary}{Corollaries}
\crefname{alphprop}{Proposition}{Propositions}
\newcommand{\Cat}[1]{\ifmmode \text{\normalfont \textbf{#1}} \else {\normalfont \textbf{#1}}\fi}
\newcommand{\mhyphen}{\textnormal{-}}
\newcommand{\variable}{\,\mhyphen\,}
\newcommand\MakeComments[3]{
  \newcounter{#2comment}\addtocounter{#2comment}{1}%
  \newcommand{#1}[1]{%
     \textbf{\color{#3}(\StrChar{#2}{1}\arabic{#2comment})}%
     \marginpar{\tiny\raggedright\textbf{\color{#3}(\StrChar{#2}{1}\arabic{#2comment}) #2:} ##1}%
    \addtocounter{#2comment}{1}%
  }%
}
\definecolor{newpurple}{rgb}{0.8, 0, 0.9}
\MakeComments{\tnote}{Thomas}{Purple}
\MakeComments{\fnote}{Fede}{ForestGreen}
\MakeComments{\cnote}{Christos}{Orange}
\newcommand{\naturals}{\mathbb{N}}
\newcommand{\expnat}{{2^{\mathbb N}}}
\newcommand{\expeven}{{2^{\text{ev}}}}
\newcommand{\expodd}{{2^{\text{od}}}}
\newcommand{\iotaeven}{{\iota_{\text{ev}}}}
\newcommand{\iotaodd}{{\iota_{\text{od}}}}
\newcommand{\jalt}{{j}_{\rm alt}}
\newcommand{\shift}{\mathcal S}
\newcommand{\pt}{{\rm pt}}
\newcommand{\into}{\hookrightarrow}
\newcommand{\Croe}[1]{C^*_{\rm Roe}(#1)}
\newcommand{\Cfp}[1]{C^*_{\rm fp}(#1)}
\newcommand{\Clc}[1]{C^*_{\rm lc}(#1)}
\newcommand{\wc}{\mathcal{WC}}
\newcommand{\cone}{\mathcal{O}}
\newcommand{\wcone}{\mathcal{O}_\Gamma}
\newcommand{\iwcone}{\wcone^{\mathbb N+ 1}}
\newcommand{\rpos}{{\mathbb R}_{\geq 1}}
\newcommand{\npos}{{\mathbb N}_{\geq 1}}
\DeclareMathOperator{\topp}{top}
\DeclareMathOperator{\bott}{bot}
\DeclareMathOperator{\up}{up}
\DeclareMathOperator{\down}{down}
\begin{document}

\title[Coarse Baum--Connes and warped cones]{Coarse Baum--Connes and warped cones: failure of surjectivity in odd degree}
\date{\today}

\begin{abstract}
  We prove a conjecture of Roe by constructing unified warped cones that violate the coarse Baum–Connes conjecture. Interestingly, the reason for this is probably not what Roe expected, as the obstruction arises in odd rather than even degree.
\end{abstract}

\author{Christos Kitsios}
\address{Mathematisches Institut, Georg-August-Universit\"{a}t G\"{o}ttingen, Bunsenstr. 3-5, 37073 G\"{o}ttingen, Germany.}
\email{christos.kitsios@uni-goettingen.de}

\author{Thomas Schick}
\address{Mathematisches Institut, Georg-August-Universit\"{a}t G\"{o}ttingen, Bunsenstr. 3-5, 37073 G\"{o}ttingen, Germany.}
\email{thomas.schick@uni-goettingen.de}

\author{Federico Vigolo}
\address{Mathematisches Institut, Georg-August-Universit\"{a}t G\"{o}ttingen, Bunsenstr. 3-5, 37073 G\"{o}ttingen, Germany.}
\email{federico.vigolo@uni-goettingen.de}

\maketitle


The study of \emph{large scale features} of metric spaces is an important
aspect of geometry. Often, only the large scale geometry is canonically
defined, as in the case of finitely generated groups and their Cayley
graphs. In other situations, only the large scale features are relevant, or it
is beneficial to concentrate on them to single out the most important aspects,
as happens often in large scale index theory.

We concentrate on proper metric spaces, i.e., spaces where closed bounded subsets are
compact. 
With any proper metric space $X$ is associated a $C^*$-algebra of special interest, namely its \emph{Roe algebra} $\Croe{X}$.
The original motivation for introducing such algebras stemmed from index-theoretic considerations \cites{Roe1996IndexTheory,Roe1993CoarseCohomology}, and their modern definition was formalized in \cites{HigsonPedersenRoe1997AlgebrasControlled,HigsonRoeYu1993CoarseMayerVietoris}.
As it turns out, these $C^*$-algebras can be interpreted as an analytic
counterpart to the large scale geometry of metric spaces. Namely, it is a
deep result that two proper metric spaces are coarsely equivalent if and only if
their Roe algebras are isomorphic
\cites{MartinezVigolo2024RigidityFramework,MartinezVigolo2024Crigidity}.

One very interesting large scale invariant of a proper metric spaces $X$ is the
K\=/theory of its Roe algebra, which is the receptacle of large scale index
invariants. These indices, and hence $K_*(\Croe X)$, contain important geometric
information about $X$. However, the K\=/theory of $C^*$-algebras is an intricate object
and often not easy to get ones' hands on. 
A more topological counterpart to it is the
\emph{coarse K-homology} $KX_*(X)$ of $X$  \cite{Roe1993CoarseCohomology}.
There is a natural homomorphism
\[
    \mu_c \colon KX_*(X)\to K_*(\Croe X)
\]
connecting the two sides. This was constructed by Higson--Roe and Yu, and is known as the \emph{coarse assembly map}
\cites{Roe1993CoarseCohomology,HigsonRoe1995CoarseBaum,Yu1995CoarseBaum}. The
coarse Baum--Connes conjecture, which is due to Higson--Roe and Yu, posits that $\mu_c$ is an isomorphism whenever
$X$ is a metric space of bounded geometry. The name ``coarse Baum--Connes'' is justified by analogy with the famous Baum--Connes Conjecture on universal spaces for proper actions \cites{BaumConnes2000GeometricKtheory,BaumConnesHigson1994ClassifyingSpace}.

Notably, if $\mu_c$ is an isomorphism for a given metric space $X$, then this space must also satisfy a whole range of deep results. It can indeed be argued that the existence of the coarse assembly map is a prime source of motivation for being interested in Roe algebras in the first place. 
We refer to \cites{WillettYu2020HigherIndex} for a comprehensive introduction to the subject.

\smallskip

It has been known for a long time that the coarse Baum--Connes conjecture is
alas not true in general \cites{HigsonLafforgueSkandalis2002CounterexamplesBaum}.
However, there are very few examples of spaces for which $\mu_c$ is not an isomorphism. Much the contrary is true: most `reasonable' spaces do satisfy the coarse Baum--Connes conjecture \cite{Yu2000CoarseBaum}.
As a matter of fact, the only 
obstruction to the coarse Baum--Connes conjecture known so far is the existence of \emph{non-compact ghost projections} in $\Croe{X}$.\footnote{Note that we only stated the coarse Baum--Connes conjecture for spaces of bounded geometry. For spaces of unbounded geometry a few other obstructions are known.}
These projections are inherently `global' objects and the K\=/theory classes they determine are generally expected to lie outside the image of $\mu_c$.

The main example of spaces having non-compact ghost projections are built taking coarse disjoint unions of expander graphs, and in some cases it was indeed shown that these spaces violate the coarse Baum--Connes conjecture. This is how the first counterexamples were found \cites{HigsonLafforgueSkandalis2002CounterexamplesBaum}.
Subsequent works have enlarged the class of expanders known to violate the surjectivity of $\mu_c$ \cites{WillettYu2012HigherIndex,Finn-Sell2014FibredCoarse} and identified other classes of metric spaces for which surjectivity fails \cites{LiVigoloZhang2023AsymptoticExpansion,LiVigoloZhang2023MarkovianRoealgebraic,KhukhroLiVigolo2021StructureAsymptotic,LiSpakulaZhang2023MeasuredAsymptotic,Sawicki2018GeometryMetric}.
However, these developments all rely on similar principles: constructing spaces as coarse disjoint unions admitting non-compact ghost projections analogous to the case of expanders.
This is far from being a satisfactory picture.

\smallskip

In this context, one very interesting space to consider is Roe's \emph{unified warped cone} \cites{Roe1995FoliationsCoarse,Roe1996IndexTheory,Roe2005WarpedCones}.
The construction is as follows. Given a (compact) Riemannian manifold $M$ with metric tensor $\varrho$, its \emph{cone} $\cone M$ is the manifold $M\times \rpos$ equipped with the Riemannian metric $t^2\varrho + dt^2$, where $t$ is the $\rpos$-coordinate.
Let now $\Gamma$ be a group equipped with a fixed finite symmetric generating set $S\subset \Gamma$, and let $\Gamma\curvearrowright M$ be an action by homeomorphisms. The \emph{(unified) warped cone} $\wcone M$ is the metric space $(M\times\rpos, \delta^\Gamma)$, where
$\delta^\Gamma$ is defined as the largest metric satisfying:
\[
\delta^\Gamma\leq d_{\cone M}
\quad\text{and}\quad
\delta^\Gamma((x,t),(s\cdot x,t))\leq 1
\]
where $d_{\cone M}$ is the metric on $\cone M$ and $x,x'\in M$, $s\in S$ are arbitrary.
That is, $\delta^\Gamma$ is obtained by warping the metric $d_{\cone M}$ by adding shortcuts along the group action (alternatively, $\delta^\Gamma$ is defined by imposing that all the orbit maps $\Gamma\to \wcone M$ be $1$-Lipschitz with respect to the word metric). It is easy to verify that $\delta^\Gamma$ is well\=/defined.

The warped cone $\wcone M$ has a rather nice local geometry. For instance, if the action is free and by isometries, one can show that the ball of radius $R$ centered at a point $(x,t)$ will converge (in the Gromov--Hausdorff sense) to the ball of radius $R$ in $\mathbb R^{\dim(M)}\times \Gamma$ as $t\to\infty$
\cites{deLaatVigolo2019SuperexpandersGroup,Vigolo2019DiscreteFundamental}.
On the other hand, the global geometric features of $\wcone M$ are very sensitive to the dynamical properties of the action \cites{Roe2005WarpedCones,FisherNguyenvanLimbeek2019RigidityWarped,NowakSawicki2017WarpedCones,Vigolo2019DiscreteFundamental}.
In particular, Roe claimed already in \cite{Roe2005WarpedCones} that warped cones could be used to construct counterexamples to the coarse Baum--Connes conjecture---this was one of Roe's primary motivations for introducing this construction.
This conjecture was later made precise by Dru\k{t}u and Nowak as follows:

\begin{alphconj}[Roe, Dru\k{t}u--Nowak \cite{DrutuNowak2019KazhdanProjections}*{Conjecture 7.7}]\label{conj:roe}
Let $M$ be a compact Riemannian manifold and $\Gamma\curvearrowright M$ a measure-preserving action by diffeomorphisms. If the action has a spectral gap, then $\mu_c$ is not surjective for $\wcone M$.
\end{alphconj}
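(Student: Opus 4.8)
The plan is to exhibit an explicit class in the \emph{odd} group $K_1(\Croe{\wcone M})$ and show that it cannot lie in the image of $\mu_c$. The organising principle is that the unified warped cone is a \emph{coarse suspension} of its levels $M\times\{t\}$: the $\rpos$\=/coordinate plays the role it plays for an ordinary open cone $\cone Y$, whose Roe algebra realises a degree shift of the $K$\=/homology of $Y$. Consequently the even\=/degree obstruction that a spectral gap is known to produce---the Kazhdan/ghost projection behind the Higson--Lafforgue--Skandalis expander counterexamples, and studied through Dru\k{t}u--Nowak's Kazhdan projections---reappears here, after the degree shift, in odd degree. This is the sense in which the failure is ``not what Roe expected''.

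\textbf{Step 1 (a coefficient model for $\Croe{\wcone M}$ and $\mu_c$).} I would first compare $\Croe{\wcone M}$, together with the coarse assembly map, with a Baum--Connes picture with coefficients in $C(M)$. Replacing $M\times\rpos$ by its $\mathbb N$\=/sampling $\bigsqcup_n N_n$---with $N_n\subset M$ a $\tfrac{1}{n}$\=/net and consecutive levels at bounded distance, so that this is coarsely equivalent to $\wcone M$---the spectral gap of $\Gamma\curvearrowright M$ forces the sequence of levels to be an \emph{asymptotic expander}. The controlled\=/operator machinery for asymptotically expanding sequences then identifies the relevant subquotients of $\Croe{\wcone M}$ in terms of the crossed product $C(M)\rtimes\Gamma$, with a shift of degree by $1$ inherited from the telescope geometry of the $\rpos$\=/direction, under which $\mu_c$ corresponds to (a suspension of) the Baum--Connes assembly map $K_*^\Gamma(\underline E\Gamma;C(M))\to K_*(C(M)\rtimes\Gamma)$.

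\textbf{Step 2 (the obstruction class).} A spectral gap says exactly that $1$ is an isolated point of the spectrum of the averaging element $\tfrac{1}{|S|}\sum_{s\in S}s$ in the Koopman representation on $L^2(M)$; hence this element has $1$ as an isolated spectral value already in $C(M)\rtimes_{\max}\Gamma$, and holomorphic functional calculus yields a nonzero \emph{Kazhdan projection} $p$, with $[p]\in K_0(C(M)\rtimes_{\max}\Gamma)$. On the warped cone $p$ is visible as the ghost operator which on each level $N_n$ is the rank\=/one projection onto the constants: its matrix entries are of size $\sim 1/|N_n|\to 0$, and it is a norm\=/limit of finite\=/propagation operators because the spectral gap is uniform across the levels. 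Transporting $[p]$ through Step~1 and across the degree shift produces the class $\alpha\in K_1(\Croe{\wcone M})$.

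\textbf{Step 3 (failure of surjectivity, and the parity).} It remains to see that $\alpha\notin\operatorname{im}(\mu_c)$. Via Step~1 this amounts to $[p]$ not lying in the image of the Baum--Connes assembly map for $\Gamma\curvearrowright M$---an obstruction of Higson--Lafforgue--Skandalis type. The cleanest route is probably to pass to the maximal Roe algebra, through which $\mu_c$ factors and where the comparison of Step~1 should become an honest identification with a suspension of the maximal crossed product and its assembly map; a trace on $C(M)\rtimes\Gamma$, or the discrepancy between the maximal and reduced completions, then detects that $[p]$ is not a ``geometric'' assembly class, precisely because the spectral gap makes $p$ an exotic projection not weakly contained in the regular representation. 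Pulling this back, $\alpha$ is a nonzero class of $K_1(\Croe{\wcone M})$ outside $\operatorname{im}(\mu_c)$; the same analysis produces no obstruction in $K_0$, so the failure of surjectivity is confined to odd degree.

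\textbf{Main obstacle.} The technical core is Step~1: making rigorous the comparison between the warped\=/cone Roe algebra, with its assembly map, and the crossed\=/product/Baum--Connes picture \emph{with the correct suspension shift}. This forces one to interlace the fine structure of asymptotically expanding level sequences with the telescope geometry of the cone direction, and to check compatibility of all the assembly maps involved. A secondary---but genuine---difficulty is controlling the passage between maximal and reduced completions, so that $\alpha$ does not die in the \emph{reduced} Roe algebra; this is a second place where the spectral\=/gap hypothesis, and not merely ergodicity, is used.
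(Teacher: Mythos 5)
Your guiding intuition --- that the cone direction should produce a degree shift, so that the even-degree ghost/Kazhdan projection reappears in $K_1$ --- is the right one, but each of your three steps has a gap, and the middle one is fatal as stated. You never construct a mechanism for the degree shift. The naive transport of the averaging projection to the unified cone (i.e.\ $m\otimes\id_{L^2\rpos}$, cut down by a projection $q\in\Croe{\rpos}$ to make it locally compact) is a class in $K_0(\Croe{\wcone M})$, and it \emph{vanishes} there: $\rpos$ is flasque, so the class factors through $K_0(\mathcal K(L^2M)\otimes\Croe{\rpos})=\{0\}$ (equivalently, an Eilenberg swindle kills it); this is exactly \cref{thm:vanishing}. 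To land in odd degree one needs an actual boundary map. In the paper this comes from a coarse Mayer--Vietoris/telescope argument (\cref{thm:LES}): cutting $\rpos$ along $\expnat$ yields a long exact sequence in which $\partial\colon K_1(\Croe{\wcone M})$ surjects onto $\ker\bigl(\id+\shift_*\bigr)\subseteq K_0(\Croe{\wcone^{\expnat}M})$. Even granting such a sequence, the averaging class on $\wcone^{\expnat}M$ does \emph{not} lie in that kernel; the paper has to pass to an alternating ghost class $[\mathfrak G^{\expnat}_{\rm alt}]$ and correct it by a compactly supported local class $[p_{\rm alt}]$ before it can be pulled back through $\partial$. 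None of this appears in your Step 2: ``transporting $[p]$ across the degree shift'' is precisely the step that has to be built.

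Your detection step would moreover fail on the motivating examples. You propose to reduce to ``$[p]$ is not in the image of the Baum--Connes assembly map for $\Gamma\curvearrowright M$'', detected by maximal-vs-reduced completions or a trace on $C(M)\rtimes\Gamma$. But in the standard example ($\Gamma$ a free subgroup of $\SU(2,\mathbb C)$ acting with spectral gap) $\Gamma$ is a-T-menable and K-amenable: Baum--Connes with coefficients holds and the maximal and reduced crossed products have the same K-theory, so there is no obstruction of this type at the crossed-product level; and your Step 1 identification of $\mu_c$ for $\wcone M$ with a suspended assembly map with coefficients $C(M)$ is asserted rather than proved and is not available in the literature. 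The detection that actually works is coarse-geometric and happens on the coarse disjoint union $\wcone^{\expnat}M$: two traces $\tau_{\rm d}\neq\tau_{\rm u}$ \`a la Willett--Yu (\cref{thm: old-counterexample}), and it is exactly here that property A, freeness and the Lipschitz condition enter --- hypotheses which the conjecture as stated does not provide and which the paper has to add in \cref{thm:cBC_counter} (while weakening spectral gap to strong ergodicity). Your plan claims the conjecture under spectral gap alone, i.e.\ strictly more than the paper proves, and the places where it overshoots are precisely the unproved identifications in Steps 1 and 3.
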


If this conjecture was true, this would be a very interesting source of examples indeed. For instance, if $\Gamma =\pi_1(N)$ for some compact manifold $N$, then $\wcone M$ would be coarsely equivalent to a complete Riemannian manifold with compact boundary (to see this, it is enough to observe that $\wcone M$ is coarsely equivalent to a ``foliated warped cone'' \cite{Roe2005WarpedCones}*{Lemma 1.12}).
Replacing the boundary with a cusp would also yield a complete Riemannian
manifold without boundary and with compact core, which by a Mayer--Vietoris argument would still violate the coarse Baum--Connes conjecture.
This setup is quite different from the previous source of counterexamples.

Examples of actions as above are easily obtained by appropriately taking two cocompact lattices $\Gamma,\Lambda$ in some higher rank Lie group $G$ and let $\Gamma$ act on $M\coloneqq G/\Lambda$.
An even simpler example is to take $\Gamma$ to be a non-abelian free subgroup
of $\SU(2,\overline{\mathbb Q} )$ and let $\Gamma\curvearrowright\SU(2,\mathbb
C)$ be the action by left multiplication. This free, isometric action has
spectral gap by deep work of Bourgain--Gamburd
\cite{BourgainGamburd2007SpectralGap}.  

\smallskip

The first result of this paper appears to provide evidence against \cref{conj:roe}. Namely, the motivation behind the claim of Dru\k{t}u--Nowak (and presumably also Roe's) is that under those assumptions the Roe algebra $\Croe{\wcone M}$ can be shown to contain non-compact ghost projections. We will however show that these projections vanish in K\=/theory:

\begin{alphthm}[\cref{thm:vanishing}]\label{thm:vanishing}
    If $\Gamma\curvearrowright M$ is ergodic and $\mathfrak G\in \Croe{\wcone
      M}$ is a generalized Dru\k{t}u--Nowak projection, then $[\mathfrak G]=0$
    in $K_0(\Croe{\wcone M})$.
\end{alphthm}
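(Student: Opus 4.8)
The plan is to prove $[\mathfrak G]=0$ by an Eilenberg swindle carried out along the cone direction of $\wcone M$. The point is that, unlike a coarse disjoint union of expanders, $\wcone M$ contains a genuine ray $\rpos$ along which mass can be translated to infinity at \emph{bounded} propagation, and this is exactly what a swindle needs. First I would record the shape of a generalized Dru\k{t}u--Nowak projection: after replacing $\wcone M$ by a discretization $\mathcal N=\bigsqcup_{n\in\naturals}N_n$, with $N_n$ an increasingly fine net in $M\times\{n\}$, such a $\mathfrak G$ is, up to an inner perturbation of bounded propagation, block diagonal along the levels, $\mathfrak G=\bigoplus_n\mathfrak G_n$; ergodicity forces the relevant Kazhdan projection to be the rank-one averaging projection onto constants, so every $\mathfrak G_n$ has rank one and $\mathfrak G$ is locally compact. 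Although $\mathfrak G$ has propagation comparable with $\diam(M\times\{n\},\delta^\Gamma)\to\infty$, it lies in $\Croe{\wcone M}$ because the \emph{uniform} spectral gap provides polynomials $P_N$ of level-independent degree with $P_N(\mathfrak A)\to\mathfrak G$ in norm, $\mathfrak A=\bigoplus_nA_n$ being the averaging operator over the generating shortcuts, which has propagation $O(1)$. Keeping track of this ``controlled'' presentation is the recurring technical demand.

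Since consecutive levels of $\wcone M$ lie within distance $1$, there is an isometry $W$ of propagation $\le1$ implementing the unit shift $(x,n)\mapsto(x,n+1)$ up the cone, with $W^{*}W=1$ but $WW^{*}$ a proper projection — the asymmetry that makes a swindle collapse rather than permute, and which is available precisely because $\rpos$ is a ray. Because a shortcut $(x,t)\mapsto(s\cdot x,t)$ costs $1$ at \emph{every} level, the conjugates $W^{j}\mathfrak A W^{*j}$ retain propagation $O(1)$ as long as the block born at level $n$ is displaced upward by only $O(n)$ levels (which forces the nets $N_n$ to be chosen sufficiently fine), so the shifted projections $W^{j}\mathfrak G W^{*j}$ lie in $\Croe{\wcone M}$ with bounds independent of $j$ in that range. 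Assembling them into an amplification $\widetilde{\mathfrak G}$, with the $j$-th summand in its own copy of the multiplicity space, one exploits the ray structure again: $W^{j}\mathfrak G W^{*j}$ is supported on the levels $\ge j$, so every bounded subset of $\wcone M$ meets only finitely many summands, and with the rank-one blocks this makes $\widetilde{\mathfrak G}$ locally compact, hence an element of $\Croe{\wcone M}$. The swindle then identifies, in $K_0$, $[\mathfrak G]+[\widetilde{\mathfrak G}]$ with $[\widetilde{\mathfrak G}]$ modulo the class of a residual ghost projection; showing that this residue also vanishes then yields $[\mathfrak G]=0$.

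The genuine obstacle, I expect, is precisely the interplay of uniform propagation control and the residual term. Each block of a \emph{ghost} projection is necessarily delocalized — its support must spread for the matrix coefficients to decay — so it cannot be pushed arbitrarily far up the cone at bounded propagation, only $O(n)$ levels; this is exactly what prevents the naive staircase from closing and forces a residual ghost projection, which one still has to kill. Dealing with this — presumably by a finer argument trading the growth of the levels against the delocalization of the blocks, or by passing to the ghost ideal of $\Croe{\wcone M}$ and using flasqueness of the $\rpos$-direction there — is where the real work lies. Granting it, the rest is formal: stability of Roe algebras, the usual swindle bookkeeping, and the vanishing of the map $K_0(\mathbb K)\to K_0(\Croe{\wcone M})$.
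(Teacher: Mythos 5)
Your strategy is essentially the Eilenberg swindle that the paper also presents as a secondary argument (its main proof is different, see below), but as written it has a genuine gap exactly where you yourself locate "the real work". Your swindle does not close: you only allow the block at level $n$ to be displaced by $O(n)$ levels, and consequently you end up with a "residual ghost projection" whose vanishing in $K_0$ you leave unproven. That residue is again a non-compact ghost projection of precisely the kind whose class you are trying to kill, so deferring it makes the argument circular rather than merely technical. Moreover, the propagation worry that forces this residue is misplaced: the whole point of the Dru\c{t}u--Nowak situation is that the finite-propagation approximants of $\mathfrak G$ come from the \emph{dynamics} (operators of finite dynamical propagation, e.g.\ polynomials in the Markov averaging operator), and a shortcut $(x,t)\mapsto(s\cdot x,t)$ costs $1$ at \emph{every} level of the warped cone; hence shifting such an approximant arbitrarily far up the cone does not increase its warped propagation at all. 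Equivalently, since $\mathfrak G$ is of the form $p\otimes q$, its shift up the cone is (a cut-down of) an operator of the same form, with the \emph{same} approximants cut down by level projections. With this observation the full amplified sum $\sum_j W^j\mathfrak G W^{*j}$ (with multiplicity bookkeeping) lies in $\Croe{\wcone M}$ outright, the usual swindle identity applies, and no residual term ever appears --- this is how the paper's swindle (its Section 2.1) is carried out. A second, smaller issue: your reduction to level-wise rank-one blocks only treats $m\otimes\id$-type projections; a \emph{generalized} Dru\c{t}u--Nowak projection is $p\otimes q$ with $p$ an arbitrary compact in $\Cfp{\Gamma\curvearrowright M}$ and $q\in\Croe{\rpos}$ arbitrary (in particular $q$ need not have finite propagation or be block diagonal along integer levels), and your discretized block picture does not cover this generality.

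For comparison, the paper's primary proof avoids the swindle entirely and is much shorter: ergodicity makes $\Cfp{\Gamma\curvearrowright M}$ an irreducible $C^*$-algebra; since it contains the nonzero compact operator $p$, it contains all of $\mathcal K(L^2M)$, whence $\mathcal K(L^2M)\otimes\Croe{\rpos}\subseteq\Croe{\wcone M}$. The K-theory of this subalgebra is $K_*(\Croe{\rpos})=0$ by flasqueness of the ray, and since $\mathfrak G=p\otimes q$ lies in it, naturality of $K$-theory gives $[\mathfrak G]=0$. Note that this is where ergodicity is genuinely used --- in your proposal ergodicity only enters through an (unjustified and unnecessary) identification of the blocks with the averaging projection. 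If you want to salvage your approach, replace the $O(n)$-displacement bookkeeping by the dynamical-propagation observation above, or better, run the structural argument through the description $\Croe{X}=\Cfp{X}\cap\Clc{X}$ as the paper does.
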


We prove \cref{thm:vanishing} combining the naturality properties of K\=/theory with a change of perspective coming from \cites{deLaatVigoloWinkel2023DynamicalPropagation,MartinezVigolo2023RoelikeAlgebras}.

\smallskip

As it turns out, the situation is however not at all as dire as it may appear. 
In fact, the vanishing in K\=/theory of the Dru\k{t}u--Nowak projection can be explained via a Mayer--Vietoris argument, which also shows the path to follow to save \cref{conj:roe}.
In turn, we can then confirm the conjecture of Roe by proving the following.

\begin{alphthm}\label{thm:cBC_counter}
  Let $\Gamma$ be a group with property A and $\Gamma\curvearrowright M$ a free and strongly ergodic action by diffeomorphisms. Then the coarse Baum--Connes map
 \begin{equation*}
      \mu_c\colon KX_1(\wcone M)\to K_1(\Croe{\wcone M})
  \end{equation*}
  is not surjective.
\end{alphthm}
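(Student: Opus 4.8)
The plan is to build the obstruction in odd degree via a Mayer--Vietoris decomposition of $\wcone M$ into its ``cylinder'' part and a cusp/cone part, exactly mirroring the explanation for the vanishing in Theorem~\ref{thm:vanishing}. First I would exploit the fact that $\wcone M$ is coarsely equivalent to a foliated warped cone (Roe, \cite{Roe2005WarpedCones}*{Lemma 1.12}), so that one may cover it by two coarse subspaces $A$ and $B$: a neighbourhood $A$ of the ``small $t$'' region which is coarsely a half-open cylinder $M\times\rpos$ warped only in bounded scales, and the ``large $t$'' region $B$ which, because the action has property~A upstairs, is itself coarsely nice (its Roe algebra has computable K\=/theory). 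The coarse intersection $A\cap B$ is coarsely a single warped slice, i.e. a space coarsely equivalent to $\Gamma$ (or to a box space / the group itself), whose Roe-algebra K\=/theory is governed by $K_*(C^*_r\Gamma)$ via property~A and the Baum--Connes isomorphism for $\Gamma$. Running the coarse Mayer--Vietoris sequence \cite{HigsonRoeYu1993CoarseMayerVietoris} for the pair $(A,B)$, and comparing it with the corresponding Mayer--Vietoris sequence in coarse K-homology $KX_*$, I obtain a commuting ladder relating the two six-term sequences via the assembly maps.

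The key mechanism is that strong ergodicity of $\Gamma\curvearrowright M$ forces the relevant connecting map on the analytic side to detect a ghost-type class in $K_0$ of the Roe algebra of the slice $A\cap B$ — this is precisely the Kazhdan/Dru\k{t}u--Nowak projection class, which by Theorem~\ref{thm:vanishing} dies in $K_0(\Croe{\wcone M})$ but is \emph{nonzero} in $K_0$ of the Roe algebra of the slice (strong ergodicity being exactly the condition that the averaging projection is a non-compact ghost there, cf. \cite{DrutuNowak2019KazhdanProjections}). Chasing this nonzero class around the Mayer--Vietoris boundary map $\partial\colon K_0(\Croe{A\cap B})\to K_1(\Croe{\wcone M})$, I would show $\partial$ sends it to a nonzero element $\eta\in K_1(\Croe{\wcone M})$. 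On the topological side, the analogous class in $KX_0$ of the slice is \emph{not} a ghost obstruction — coarse K-homology does not see strong ergodicity — so tracking the same position in the $KX_*$ Mayer--Vietoris ladder shows the corresponding subgroup of $KX_1(\wcone M)$ that could hit $\eta$ maps, under $\mu_c$, into the image of classes that are already killed. Concretely: by naturality of assembly, anything in $KX_1(\wcone M)$ mapping to $\eta$ would have to come from a class in $KX_0(A\cap B)$ whose assembly image in $K_0(\Croe{A\cap B})$ equals the Dru\k{t}u--Nowak projection class; but the assembly map for the slice has image exactly the ``compactly supported'' part (property~A for $\Gamma$ makes $\mu_c$ for the slice an isomorphism onto $K_*(C^*_r\Gamma)$-type classes, which do \emph{not} include the strong-ergodicity ghost), a contradiction. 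Hence $\eta\notin\operatorname{im}(\mu_c)$.

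Assembling the argument: (1) set up the coarse cover $(A,B)$ using the foliated-cone description and verify the coarse excision hypotheses; (2) compute $K_*(\Croe B)$ and $K_*(\Croe{A\cap B})$ using property~A of $\Gamma$ (so Yu's theorem gives the assembly isomorphism for these pieces), identifying $K_0(\Croe{A\cap B})$ with a group containing both a ``$C^*_r\Gamma$-part'' in the image of assembly and a rank-one ghost/averaging summand not in that image when the action is strongly ergodic; (3) write down the commuting ladder between the analytic and topological Mayer--Vietoris sequences, with vertical maps the assembly maps; (4) produce $\eta\coloneqq\partial[\text{averaging projection}]\in K_1(\Croe{\wcone M})$ and show it is nonzero (using that the averaging projection is not in $\ker\partial$, which follows from Theorem~\ref{thm:vanishing} telling us it is not in the image of $K_0(\Croe A)\oplus K_0(\Croe B)\to K_0(\Croe{\wcone M})$, i.e. it is a nontrivial ``relative'' class); (5) conclude by a diagram chase that nothing in $KX_1(\wcone M)$ maps to $\eta$. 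The main obstacle I anticipate is step~(2) together with the nontriviality claim in step~(4): one must argue carefully that the Dru\k{t}u--Nowak/averaging projection genuinely represents a nonzero, non-ghost-avoidable class in $K_0(\Croe{A\cap B})$ and that the Mayer--Vietoris boundary does not annihilate it — this is where strong (as opposed to mere) ergodicity is essential, and where the identification of $\Croe{A\cap B}$ with an algebra to which property~A applies (so that the only obstruction to surjectivity of $\mu_c$ on the slice would itself be a ghost projection, which exists precisely by strong ergodicity) must be made precise. Controlling the coarse geometry of the intersection slice well enough to make these K\=/theory computations rigorous, and checking the ladder genuinely commutes (naturality of $\mu_c$ under the coarse maps involved, via \cites{deLaatVigoloWinkel2023DynamicalPropagation,MartinezVigolo2023RoelikeAlgebras}), is the technical heart of the proof.
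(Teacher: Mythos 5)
Your overall philosophy (produce the odd-degree obstruction as a $\partial$-relative of a Dru\k{t}u--Nowak ghost class living on a lower piece of a coarse Mayer--Vietoris decomposition) is the right one, but the specific decomposition you propose cannot work, and this is a genuine gap rather than a technicality. A single warped slice $M\times\{t\}$ is a compact, hence bounded, metric space: coarsely it is a point, not a copy of $\Gamma$ (the convergence of $R$-balls to $\mathbb R^{\dim M}\times\Gamma$ is a local statement as $t\to\infty$, not a coarse identification of any level set with $\Gamma$). Consequently $K_0(\Croe{A\cap B})\cong\mathbb Z$ with assembly an isomorphism, and there is no room there for a non-compact ghost/averaging summand outside the image of $\mu_c$; your step (2) collapses. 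Moreover, with $A$ the ``small $t$'' region and $B$ the ``large $t$'' region, $B$ is coarsely equivalent to all of $\wcone M$ and $A$, $A\cap B$ are bounded, so the Mayer--Vietoris ladder degenerates and gives no leverage. There is also a directional error: the coarse Mayer--Vietoris boundary maps $K_{*+1}$ of the union to $K_*$ of the intersection, so one cannot form $\eta=\partial[\text{averaging projection}]$ starting from the intersection; one must instead exhibit a class on the intersection lying in the image of $\partial$ and then choose a preimage in $K_1(\Croe{\wcone M})$.

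The missing idea is that the two pieces must each be an \emph{infinite, unbounded} union of exponentially long intervals (e.g.\ $I_1=\bigcup_n[2^{2n},2^{2n+1}]$, $I_2=\bigcup_n[2^{2n+1},2^{2n+2}]$), so that the coarse intersection is $\wcone^{\expnat}M$, a coarse disjoint union of increasingly separated level sets. Only on such a space does strong ergodicity produce a non-compact ghost projection whose class is detectable by the two-trace argument of \cref{thm: old-counterexample} and hence lies outside $\image(\mu_c)$; this is where property A and freeness enter. One then identifies, via coarse homotopies collapsing intervals to their endpoints, the relevant map $K_0(\Croe{\wcone^{\expnat}M})\to K_0(\Croe{\wcone^{\expnat}M})$ as $\id+\shift_*$ (\cref{thm:LES}), and---the final ingredient you do not have---corrects the alternating ghost class $[\mathfrak G^{\expnat}_{\rm alt}]$ by a ``local'' class $[p_{\rm alt}]$ (supported near single points, hence in $\image(\mu_c)$) so that the difference lies in $\ker(\id+\shift_*)=\image(\partial)$. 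Only then does the commutative ladder with the assembly maps allow the concluding diagram chase: any $\partial$-preimage in $K_1(\Croe{\wcone M})$ of $[\mathfrak G^{\expnat}_{\rm alt}]-[p_{\rm alt}]$ cannot be hit by $\mu_c$. Your appeal to \cref{thm:vanishing} to get nontriviality of the boundary image is also not needed and does not function as you describe; nontriviality is carried by the traces on the intersection piece, not by the vanishing statement on the full cone.
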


\begin{remark*}
  \cref{thm:cBC_counter} does not entirely align with \cref{conj:roe}: on the one hand we requires a few extra conditions (property A, freeness), on the other hand we do not require the full power of spectral gap (strongly ergodic actions suffice).
  Note that the action $\Gamma\curvearrowright \SU(2,\mathbb C)$ described above satisfies the hypotheses of \cref{thm:cBC_counter}.
\end{remark*}

\begin{remark*}
  The failure of the coarse Baum--Connes map being an isomorphism
  we presently uncovered is indeed of a rather different flavor from what previously known,
  proving that Roe's hope here is actually satisfied.
  To our knowledge, this is the first time that the failure is shown to be in
  odd degree in a natural way (i.e.~without applying a cheap trick like a
  suspension to simply shifts degrees).

  Informally speaking, the K$_1$-classes of \cref{thm:cBC_counter} which
  do not lie in the image of the coarse Baum--Connes map are represented by
  ``ghost unitaries''. These might be related to unitaries conjugating the
  ``averaging'' Dru\k{t}u--Nowak ghost 
  projection to the trivial projection in $\Croe{\wcone M}$ to
  realize its vanishing in $K_0(\Croe{\wcone M})$. We leave it for further
  investigation to potentially make this statement precise.
\end{remark*}

To explain the strategy of proof, we introduce the following.

\begin{notation*}
  For any $A\subseteq\rpos$, we let
  \[
    \wcone^AM\coloneqq M\times A\subseteq \wcone M
  \]
  equipped with the restriction of the metric of $\wcone M$.
\end{notation*}

With this at hand, the main computation we need is the following result.

\begin{alphthm}\label{thm:LES}
  Let $HX_{\ast}$ be a coarse homology theory such that the embedding
  $\{1\}\hookrightarrow\expnat$ induces an injection $HX_\ast(\pt)\hookrightarrow
  HX_\ast(\expnat)$. Then coarse Mayer--Vietoris yields a long exact sequence
  \[
  \begin{tikzcd}
    &[0em]
    \hspace{8em}\cdots\ar{r}
    \ar[draw=none]{d}[name=X, anchor=center]{}
    &[-1em]
    HX_{\ast+1}(\wcone M)
    \ar[rounded corners,
      to path={ -- ([xshift=2ex]\tikztostart.east)
        |- (X.center) \tikztonodes
        -| ([xshift=-2ex]\tikztotarget.west)
        -- (\tikztotarget)}]{dll}[at end,swap]{\partial}
    &[-1.5em]
    \\
    HX_\ast(\wcone^\expnat M)\ar{r}{\id+\shift_\ast}
    &
    HX_\ast(\wcone^\expnat M)\ar{r}{\jalt}
    &
    HX_\ast(\wcone M)\ar{r}
    & \cdots
  \end{tikzcd}
  \]
  where $\shift\colon\wcone^\expnat M\to\wcone^\expnat M$ denotes the right shift and $\jalt$ is induced by the inclusion with sign alternating with the parity of the exponent.
\end{alphthm}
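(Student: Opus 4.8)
\emph{Strategy.} I would exhibit $\wcone M$ as a coarse mapping telescope of the right shift $\shift$ and extract the sequence from coarse Mayer--Vietoris. Writing $\rpos=[1,\infty)$, put
\[
E_{A}\coloneqq\bigcup_{k\ge 0}\bigl[2^{2k},2^{2k+1}\bigr],\qquad
E_{B}\coloneqq\{1\}\cup\bigcup_{k\ge 0}\bigl[2^{2k+1},2^{2k+2}\bigr],
\]
and $A\coloneqq\wcone^{E_{A}}M$, $B\coloneqq\wcone^{E_{B}}M$. Then $A\cup B=\wcone M$ and $A\cap B=M\times\{2^{n}:n\ge 0\}=\wcone^{\expnat}M$. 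Since shortcuts do not change the $\rpos$-coordinate and the cone metric dominates $|dt|$, one has $\delta^{\Gamma}\bigl((x,t),(x',t')\bigr)\ge|t-t'|$; hence $r$-neighbourhoods are controlled in the $\rpos$-direction, giving $N_{r}(A)\cap N_{r}(B)\subseteq M\times\bigcup_{n}[2^{n}-r,2^{n}+r]\subseteq N_{r}(\wcone^{\expnat}M)$. So $(A,B)$ is an $\omega$-excisive couple and coarse Mayer--Vietoris yields an exact sequence
\[
\cdots\to HX_{\ast}(\wcone^{\expnat}M)\xrightarrow{\ \alpha\ }HX_{\ast}(A)\oplus HX_{\ast}(B)\xrightarrow{\ \beta\ }HX_{\ast}(\wcone M)\xrightarrow{\ \partial\ }HX_{\ast-1}(\wcone^{\expnat}M)\to\cdots.
\]

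\emph{Identifying the pieces.} The warped cone is self-similar: rescaling the $\rpos$-coordinate by $t\mapsto 2t$ doubles cone-lengths while fixing shortcut-costs, so the vertical projection $\wcone^{[2^{j},2^{j+1}]}M\to\wcone^{\{2^{j+1}\}}M$, $(x,t)\mapsto(x,2^{j+1})$, is a coarse map and a coarse equivalence with distortion bounded independently of $j$, and $\shift$ likewise restricts to a coarse equivalence of each level $\wcone^{\{2^{n}\}}M$ onto the next, uniformly in $n$. Assembling these projections over the even, respectively odd, dyadic slabs (and sending the isolated level $M\times\{1\}\subseteq B$ to itself) gives coarse maps $c_{A}\colon A\to\wcone^{\expodd}M$ and $c_{B}\colon B\to\wcone^{\expeven}M$, and I claim each is an isomorphism on $HX_{\ast}$. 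Granting this, $HX_{\ast}(A)\oplus HX_{\ast}(B)\cong HX_{\ast}(\wcone^{\expodd}M)\oplus HX_{\ast}(\wcone^{\expeven}M)\cong HX_{\ast}(\wcone^{\expnat}M)$, the last step because the even and the odd levels form a coarsely disjoint partition of $\wcone^{\expnat}M$.

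\emph{Computing the maps.} Under these identifications, the inclusion of a level $M\times\{2^{n}\}$ into $A$ (resp.\ $B$) followed by the collapse is the identity when $2^{n}$ is the top of the ambient slab and is $\shift$ when it is the bottom (and is the identity on the isolated level $M\times\{1\}$). A direct bookkeeping then shows that $\alpha$ becomes $\id+\shift_{\ast}$ once the identification $HX_{\ast}(A)\oplus HX_{\ast}(B)\cong HX_{\ast}(\wcone^{\expnat}M)$ is post-composed with the parity-twisting automorphism $(x_{n})_{n}\mapsto\bigl((-1)^{n}x_{n}\bigr)_{n}$ --- this conjugation converts the $\id-\shift_{\ast}$ produced by the sign in the Mayer--Vietoris map into $\id+\shift_{\ast}$ --- and that, with the same convention, $\beta$ becomes the parity-alternating inclusion $\jalt$. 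Renaming $\partial$ gives the stated sequence.

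\emph{The crux.} The one non-formal step is that $c_{A}$ and $c_{B}$ induce isomorphisms on $HX_{\ast}$. Each dyadic slab $\wcone^{[2^{j},2^{j+1}]}M$ is bounded, hence coarsely a point, and the vertical projection onto its top level is a coarse equivalence; but its distortion grows with $j$, so these equivalences do not assemble into a coarse equivalence of the coarse disjoint unions, and coarse-homotopy invariance cannot be applied across all slabs simultaneously. Showing that the assembled collapse is nonetheless an $HX_{\ast}$-isomorphism amounts to controlling $HX_{\ast}$ on a coarsely disjoint union of bounded pieces of unboundedly growing diameter, and this is exactly the role of the hypothesis that $\{1\}\hookrightarrow\expnat$ induces an injection $HX_{\ast}(\pt)\hookrightarrow HX_{\ast}(\expnat)$: it is the $M=\pt$ instance of the required statement and rules out the $\varprojlim^{1}$-type failure that additivity of $HX_{\ast}$ would otherwise take care of for free; it also absorbs the care needed for the bounded bottom level $M\times\{1\}$. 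I expect this step to carry the weight of the argument; the $\omega$-excisiveness, the self-similarity estimate, and the sign manipulations are routine.
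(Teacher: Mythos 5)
Your decomposition is exactly the paper's ($I_1=\bigcup[2^{2k},2^{2k+1}]$, $I_2=\bigcup[2^{2k+1},2^{2k+2}]$, intersection the exponential level sets), but there is a genuine gap at the step you dismiss as routine: the identification $HX_\ast(A)\oplus HX_\ast(B)\cong HX_\ast(\wcone^\expodd M)\oplus HX_\ast(\wcone^\expeven M)\cong HX_\ast(\wcone^\expnat M)$, with the last isomorphism justified ``because the even and the odd levels form a coarsely disjoint partition''. A coarse homology theory is not additive over coarse disjoint unions in this sense, and the claimed isomorphism is false in the cases of interest: for $HX=KX$ one computes $KX_0(\expnat)\cong K_0(\pt)\oplus\prod_\naturals K_0(\pt)/\bigoplus_\naturals K_0(\pt)$, so $KX_0(\wcone^\expodd)\oplus KX_0(\wcone^\expeven)\to KX_0(\wcone^\expnat)$ has kernel a copy of $K_0(\pt)$. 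The correct statement is only a short exact sequence $0\to HX_\ast(\pt)\to HX_\ast(\wcone^\expodd M)\oplus HX_\ast(\wcone^\expeven M)\xrightarrow{(\iotaodd)_\ast+(\iotaeven)_\ast}HX_\ast(\wcone^\expnat M)\to 0$, obtained from a \emph{second} Mayer--Vietoris applied to $\expnat=(\expodd\cup\{1\})\cup(\expeven\cup\{1\})$ with intersection $\{1\}$ (\cref{lem:M-V on naturals}); the hypothesis that $\{1\}\hookrightarrow\expnat$ induces an injection is used precisely there, to kill the boundary maps of that auxiliary sequence. Your sketch never produces this quotient structure, and without it the definition of $\jalt$ and the exactness at the second copy of $HX_\ast(\wcone^\expnat M)$ do not come out: one must check that the alternating map $\paren{j_1\circ\topp_1}_\ast-\paren{j_2\circ\topp_2}_\ast$ descends to the quotient by the embedded $HX_\ast(\pt)$, which in the paper uses flasqueness of $[1,\infty)$ (the composition $\pt\to[1,\infty)\to\wcone M$ forces $2\,\iota_\ast HX_\ast(\pt)=0$). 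None of this is ``absorbed'' by the injectivity hypothesis in the way you suggest.

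Conversely, the step you single out as the crux and leave unproven is actually the easy part, and your stated reason for difficulty is incorrect. The relevant maps are not collapses of bounded slabs to points but the vertical collapses $\wcone^{I_1}M\to\wcone^\expodd M$, $(x,t)\mapsto(x,2^{2n+1})$ for $t\in[2^{2n},2^{2n+1}]$ (and similarly for $I_2$): because each slab spans only one dyadic ratio, these are uniformly (about $2$-Lipschitz) controlled, independently of the slab, and the linear homotopies $h((x,t),s)=(x,st+(1-s)2^{2n+1})$ are equicontrolled, so they are coarse homotopy equivalences of the \emph{whole} spaces $\wcone^{I_1}M\simeq\wcone^\expodd M$ in any of the standard senses (this is \cref{lem:coarse homotopy equivalence}); homotopy invariance then applies directly, with no $\varprojlim^1$-type or additivity issue and no use of the hypothesis. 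So the weight of the argument sits exactly where your proposal asserts an incorrect isomorphism, not where you expect it to.
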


We defer to \cref{sec: Mayer--Vietoris} for a details regarding coarse homology theories and Mayer--Vietoris. For now, the important point is that \cref{thm:LES} applies to both the coarse K-homology and the K\=/theory of the Roe algebra. What is more, the coarse assembly map commutes with the resulting long exact sequences. In turn, this yields:

\begin{alphcor}\label{cor:mainplus}
  There is a commutative diagram:

  \begin{equation*}
    \begin{tikzcd}
      KX_{*+1}(\wcone M) \ar[->>]{r}{\partial}\ar{d}{\mu_c}
      &
      \ker\left(
      \id+\shift_*\colon {KX}_*(\wcone^{2^\naturals}M)\to{KX}_*(\wcone^{2^\naturals}M)
      \right)
      \ar{d}{\mu_c}
      \\
      K_{*+1}(\Croe{\wcone M}) \ar[->>]{r}{\partial}
      &
      \ker\left(
      \id+\shift_*\colon  K_*(\Croe{\wcone^{2^\naturals}M})\to K_*(\Croe{\wcone^{2^\naturals}M})
      \right)
    \end{tikzcd}
  \end{equation*}
\end{alphcor}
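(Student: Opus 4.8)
The plan is to deduce \cref{cor:mainplus} from \cref{thm:LES} by applying the latter to the two coarse homology theories at hand --- coarse $K$\=/homology $KX_\ast$ and the $K$\=/theory of the Roe algebra $X\mapsto K_\ast(\Croe X)$ --- and then comparing the two resulting long exact sequences through the coarse assembly map. As established in \cref{sec: Mayer--Vietoris}, both of these are coarse homology theories, and in both cases the embedding $\{1\}\hookrightarrow\expnat$ induces the injection $HX_\ast(\pt)\hookrightarrow HX_\ast(\expnat)$ required by \cref{thm:LES}; thus \cref{thm:LES} yields two long exact sequences, one for $HX_\ast = KX_\ast$ and one for $HX_\ast = K_\ast(\Croe{\,\variable\,})$.

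Next I would invoke the fact that the coarse assembly map $\mu_c$ is a natural transformation of coarse homology theories: it commutes with the homomorphisms induced by coarse maps, and --- both theories satisfying coarse excision, and $\mu_c$ being compatible with the excision data --- it also commutes with the coarse Mayer--Vietoris connecting homomorphism. Every arrow appearing in the long exact sequence of \cref{thm:LES} is of one of these two kinds: it is either the Mayer--Vietoris boundary map $\partial$, or it is induced by a coarse map --- one of the inclusions $\wcone^A M\hookrightarrow\wcone^{A'}M$, which assemble into $\jalt$, or the right shift $\shift$, which assembles into $\id+\shift_\ast$. Consequently $\mu_c$ intertwines the long exact sequence of \cref{thm:LES} for $KX_\ast$ with the one for $K_\ast(\Croe{\,\variable\,})$, producing a commutative ladder.

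It remains to extract the stated square from this ladder. Restricting attention to the three consecutive terms $HX_{\ast+1}(\wcone M)\xrightarrow{\ \partial\ }HX_\ast(\wcone^{\expnat}M)\xrightarrow{\ \id+\shift_\ast\ }HX_\ast(\wcone^{\expnat}M)$, commutativity of the ladder tells us first that $\mu_c$ carries $\ker(\id+\shift_\ast)$ for $KX_\ast$ into the corresponding kernel for $K_\ast(\Croe{\,\variable\,})$ --- so the right-hand vertical arrow of the diagram in \cref{cor:mainplus} is well defined --- and second that the resulting square commutes. Finally, exactness of each long exact sequence at its middle term is precisely the statement that $\partial$ has image equal to $\ker(\id+\shift_\ast)$; hence $\partial$ corestricts to an epimorphism $HX_{\ast+1}(\wcone M)\twoheadrightarrow\ker(\id+\shift_\ast)$, which accounts for the two double-headed horizontal arrows. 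Putting the two rows together gives exactly the commutative diagram of \cref{cor:mainplus}.

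The only step carrying genuine content, as opposed to diagram-chasing and bookkeeping, is the compatibility of $\mu_c$ with coarse Mayer--Vietoris; this is the point I would treat with the most care. It is however part of the standard formalism that exhibits $\mu_c$ as a transformation of coarse homology theories, and it is in any case subsumed in the framework recalled in \cref{sec: Mayer--Vietoris}.
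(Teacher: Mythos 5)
Your argument is correct and is essentially the paper's own: apply \cref{thm:LES} to both $KX_\ast$ and $K_\ast(\Croe{\variable})$, use that $\mu_c$ is natural for maps induced by coarse maps and compatible with the Mayer--Vietoris boundary, and read off surjectivity onto $\ker(\id+\shift_\ast)$ from exactness of the two long exact sequences. The only (cosmetic) discrepancy is one of attribution: the injectivity hypothesis of \cref{thm:LES} for the inclusion $\{1\}\hookrightarrow\expnat$ and the compatibility of $\mu_c$ with the Mayer--Vietoris boundary are verified/cited in \cref{sec: consequences of LES} rather than \cref{sec: Mayer--Vietoris}, and the latter point is, as the paper stresses, not completely automatic but taken from the literature.
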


At this point we can use the techniques of \cites{WillettYu2012HigherIndex,Sawicki2017WarpedCones,LiVigoloZhang2023MarkovianRoealgebraic} to leverage the existence of Dru\k{t}u--Nowak projections in the Roe algebra of warped cones of the form $\wcone^{\expnat}M$ to assemble a K\=/theory class that belongs to the kernel of 
\[
  \id+\shift_\ast \colon K_0(\Croe{\wcone^{2^\naturals}M})\to K_0(\Croe{\wcone^{2^\naturals}M})
\]
but is not in $\mu_c(KX_0(\wcone^\expnat))$.
The proof of \cref{thm:cBC_counter} is then completed picking a preimage under $\partial$ to obtain a class in $K_1(\Croe{\wcone M})\smallsetminus\mu_c(KX_1(\wcone M))$.

\medskip
\noindent\textbf{Structure of the paper.}
In \cref{sec: Background} we cover some preliminaries on Roe algebras and illustrate some known partial results towards \cref{conj:roe}. The proof of \cref{thm:vanishing} is in \cref{sec: vanishing proof}.

In \cref{sec: Mayer--Vietoris} we quickly provide some background on coarse
homology theories, and then carry out the key Mayer--Vietoris computations
that prove \cref{thm:LES}. In \cref{sec: consequences of LES} we prove \cref{thm:cBC_counter} and we also show how \cref{thm:LES} can be used to provide an alternative proof for the vanishing of the Dru\k{t}u--Nowak projection in K-theory (\cref{cor: vanishing II}).

To contain the length of this note, we will not discuss the coarse geometric preliminaries, nor will we provide detailed proofs concerning geometric properties of warped cones. Besides Roe's paper \cite{Roe2005WarpedCones}, useful references for the latter are \cites{Vigolo2018GeometryActions,Sawicki2018GeometryMetric}.

\begin{remark*}
    \begin{enumerate}
        \item The warped cone $\wcone X$ can be defined for an arbitrary (compact) metric space $X$. \cref{conj:roe} is also stated in this more general setting, but in this paper we prefer to restrict to actions on compact Riemannian manifolds to limit the amount of technicalities. The arguments we outline extend without difficulty to the general setup.
        \item Sometimes the term ``warped cone'' is used to denote the family of its level-sets $\wc(\Gamma\curvearrowright M)\coloneqq(M\times\{t\}\mid t\in \rpos)$. To make the distinction clear, we generally like to call $\wcone M$ the \emph{unified} warped cone. In this note we drop the ``unified'', as we are not going to need $\wc(\Gamma\curvearrowright M)$ in the sequel.
    \end{enumerate}
\end{remark*}

\medskip
\noindent\textbf{Acknowledgements.}
It is a pleasure to thank Ulrich Bunke for helpful comments on a previous version of this paper.
This work was funded by the  RTG 2491 -- \emph{Fourier Analysis and Spectral Theory} of the DFG.

\section{Background and positive results}\label{sec: Background}
A \emph{geometric module} for a proper metric space $X$ is a non-degenerate $\ast$-represen\-tation $\rho\colon C_0(X)\to\mathcal B(\mathcal H)$, where $\mathcal H$ is some separable (infinite dimensional) Hilbert space and $\mathcal B$ denotes the bounded operators. We will generally drop $\rho$ from the notation, and simply write $\mathcal H$ for the module.
It is \emph{ample} if $\rho(f)$ is not compact for any non-zero $f\in C_0(X)$.
If $M$ is a non-discrete Riemannian manifold, then $L^2(M)$ is an ample geometric module for $M$, where the representation is by pointwise multiplication.

An operator $t\in \mathcal B(\mathcal H)$ has \emph{finite propagation} if there is $R\geq 0$ such that $\rho(f)t\rho(g)=0$ for every $f,g\in C_0(X)$ with $d(\supp(f),\supp(g))>R$. It is \emph{locally compact} if $\rho(f)t$ and $t\rho(f)$ are compact for every $f\in C_0(X)$.

Let $\mathcal H$ be an ample module. We define:
\begin{align*}
    \Cfp{X} &\coloneqq  \overline{\braces{t\in\mathcal B(\mathcal H)\text{ of finite propagation}}},  \\
    \Clc{X} &\coloneqq \braces{t\in\mathcal B(\mathcal H)\text{ locally compact}}, \\
    \Croe{X} &\coloneqq \overline{\braces{t\in\mathcal B(\mathcal H)\text{ locally compact of finite propagation}}}.
\end{align*}
All the above are $C^*$-algebras: $\Cfp{X}$ and $\Croe{X}$ are defined taking the norm-closure to make them complete, while $\Clc{X}$ is already closed.
The local compactness in the definition of $\Croe{X}$ is necessary for this
$C^*$-algebra to have interesting K\=/theory. 

It is not hard to show that different choices of ample geometric modules
result in isomorphic $C^*$-algebras and that there is a \emph{canonical} class of
isomorphisms inducing a \emph{fixed} isomorphism of the K-groups. As we are
interested in the latter, we can safely drop the module from the notation.
More  generally, with every proper controlled function $f\colon X\to Y$ one can associate $\ast$-homomorphisms $\Croe{X}\to\Croe{Y}$ (here it is key that the modules be ample), which all induce the same homomorphism in K\=/theory $f_\ast\colon K_\ast(\Croe{X})\to K_\ast(\Croe{Y})$. The same is true for $\Cfp\variable$ and $\Clc{\variable}$ as well.
In particular, if $X$ and $Y$ are coarsely equivalent proper metric spaces then $\Cfp{X}\cong\Cfp{Y}$, $\Clc{X}\cong\Clc{Y}$, $\Croe{X}\cong\Croe{Y}$. We refer to \cites{MartinezVigolo2023RoelikeAlgebras,WillettYu2020HigherIndex} for details

In the following, we will still write $\Croe{\mathcal H}$ if we wish to stress that the module $\mathcal H$ is being used.

The coarse Baum--Connes
conjecture predicts that every element in $K_*(\Croe{X})$ is the index of a
K-homology class. As homology is intrinsically local, roughly speaking, the
coarse Baum--Connes conjecture implies that every element in $K_*(\Croe{X})$ has
``local flavor''. The description of K-homology as the K\=/theory of Yu's
localization algebra \cites{yu1997localization,qiao2010localization} shows that every
class in the image of $\mu_c$ can in fact be represented by operators of arbitrarily
small propagation.

An operator $t\in \mathcal B(\mathcal H)$ is \emph{ghost} if for every $\epsilon>0$ there is some compact subset $B\subseteq X$ such that $\norm{\rho(f)t\rho(g)}\leq \epsilon$ for any choice of $f,g\in C_0(X)$ of norm at most $1$ and supported on sets of diameter at most $1$ which are disjoint from $B$.\footnote{%
    If $X$ is a uniformly locally finite metric space and $\mathcal H = \ell^2(X;\ell^2(\mathbb N))$, $t\in\mathcal H$ is ghost if and only if for every $\epsilon>0$ there is $F\subset X$ finite such that the norm of all the coefficients $t_{xy}$ with $x,y\notin F$ is at most $\varepsilon$. This is how ``ghostness'' is usually defined.
    In the definition above, if $X$ has bounded geometry, the condition that the supports of $f,g$ have diameter at most $1$ can be equivalently replaced by ``have diameter at most $R$'' for any fixed choice of $R>0$.
}
Compact operators are easily seen to be ghost. In the converse direction, it is a natural (if somewhat naïve) guess that any ghost operator in the image of $\mu_c$ must be essentially supported on some compact subset of $X$ and thus be compact. To make this belief stronger, it is known that if $\Croe{X}$ does not contain any non-compact ghost operator, then $X$ has property A \cite{RoeWillett2014GhostbustingProperty}, and hence satisfies the coarse Baum--Connes Conjecture \cite{Yu2000CoarseBaum}. In particular, spaces for which $\mu_c$ is not an isomorphism \emph{must} have non-compact ghost operators in their Roe algebras. If moreover they contain a non-compact ghost \emph{projection} $P$ in the Roe algebra, it is then natural to expect the K\=/theory class $[P]\in K_0(\Croe{X})$ to be problematic.

\begin{example}\label{exmp: expanders}
    Let $X=\bigsqcup_{n\in\mathbb N} \mathcal G_n$ be a \emph{coarse disjoint union} of expander graphs (i.e.~the distance between vertices in two components $\mathcal G_n$, $\mathcal G_m$, $n\neq m$ is set to be some arbitrarily chosen value greater than the diameter of both $\mathcal G_n$, $\mathcal G_m$. The choice does not matter up to coarse equivalence). Let $P\in \ell^2(X)$ denote the projection onto the space of functions that are constant on each $\mathcal G_n$. The condition that $\mathcal G_n$ are a family of expanders implies that $P$ is a non-compact ghost that belongs to $\Croe{\ell^2(X)}$.
    One technical detail here is that $\Croe{\ell^2(X)}$ is \emph{not} the Roe algebra of $X$,  because $\ell^2(X)$ is not an ample module.
    The easy way out is then to fix some finite rank projection $q\in \mathcal B(\ell^2(\mathbb N))$ and observe that $P\otimes q\in \mathcal B(\ell^2(X,\ell^2(\mathbb N)))$ is now a non-compact ghost projection in $\Croe{\ell^2(X,\ell^2(\mathbb N))}\cong\Croe{X}$.

    Under a large girth assumption on the expander family, it can be shown that $[P\otimes q]$ lies outside the image of the coarse assembly map $\mu_c$ \cite{WillettYu2012HigherIndex}. Going beyond expanders, the same idea can be extended to several other classes of spaces $X$ described as coarse disjoint unions of compact subspaces that are ``expanding enough'' \cites{KhukhroLiVigolo2021StructureAsymptotic,LiSpakulaZhang2023MeasuredAsymptotic,LiVigoloZhang2023MarkovianRoealgebraic}.
    By and large, this is the main source of counterexamples to various variants of the coarse Baum--Connes conjecture \cites{AparicioJulgValette2019BaumConnes,WillettYu2020HigherIndex}.
\end{example}

Let now $\Gamma\curvearrowright M$ be an action by diffeomorphisms on a
compact Riemannian manifold, and $\wcone M$ the associated warped cone. The space $L^2(M\times \rpos) \cong L^2 M\otimes L^2\rpos$ is an ample module for $\wcone M$ when equipped with the natural representation by pointwise multiplication. Here we are of course giving $\rpos$ the Lebesgue measure.
Let $m\colon L^2M\to \mathbb C$ be the mean-value projection. Then $\mathfrak G \coloneqq m\otimes \id_{L^2\rpos}\in\mathcal B(L^2 M\otimes L^2\rpos)$ is the projection onto the space of functions that are constant on each level-set of the warped cone:
\[
\mathfrak G \xi (x,t)=\int_{M} \xi(y,t) \dd y.
\]
It is proved in \cite{DrutuNowak2019KazhdanProjections}*{Theorem 7.6} that if
the $\Gamma$-action is volume preserving and has a spectral gap, then
$\mathfrak G$ is approximable via finite propagation operators. Note the
\emph{crucial consequence} that then $\mathfrak G\in\Cfp{\wcone M}$.
This projection is clearly non-compact and is easily seen to be ghost. This motivated the statement of \cref{conj:roe}.

\begin{remark}
    The result of Dru\k{t}u--Nowak was greatly generalized in \cite{LiVigoloZhang2023MarkovianRoealgebraic}. Namely, \cite{LiVigoloZhang2023MarkovianRoealgebraic}*{Theorem E} shows that if $\Gamma\curvearrowright M$ is a measure-class preserving continuous action then $\mathfrak G\in\Cfp{\wcone M}$ if and only if $\Gamma\curvearrowright M$ is \emph{strongly ergodic}.\footnote{
      A measure-class preserving action on a probability space $\Gamma\curvearrowright (X,\nu)$ is strongly ergodic if every sequence of  almost invariant subsets (i.e.~measurable $C_n\subseteq X$ such that $\nu(C_n\triangle\gamma\cdot C_n)\to 0 $ for every $\gamma \in\Gamma$) must be almost invariant for trivial reasons (i.e.~$\nu(C_n)(\nu(X)-\nu(C_n))\to 0$).
    } 
\end{remark}

One minor technical difficulty at this point is that the projection $\mathfrak G$ never belongs to $\Croe{\wcone M}$, because it is quite clearly not locally compact. The easiest workaround is to replace the warped cone $\wcone M$ with the \emph{integral warped cone} $\iwcone M$, where we are using $\mathbb N+1$ in place of $\mathbb N_{\geq 1}$ for typesetting reasons (recall that we denote by $\wcone^A M$ the subspace $ M\times A\subseteq \wcone M$).
This is a coarsely dense subspace of the warped cone, and it is hence coarsely equivalent to it. It follows that $\Croe{\iwcone M}\cong\Croe{\wcone M}$, so we are entitled to work with the former.

So long as $M$ is not discrete (i.e.~it is not zero dimensional), the natural geometric module $L^2(M\times \npos)\cong L^2M\otimes\ell^2\npos$ is ample and can hence be used to construct the Roe algebra. The advantage now is that $\ell^2\npos$ is locally finite dimensional, hence the \emph{integral Dru\k{t}u--Nowak projection}
\[
    \mathfrak G^{\mathbb N+1}\coloneqq m\otimes \id_{\ell^2\npos}
\]
is always locally compact.

In \cite{Sawicki2017WarpedCones}, \cite{DrutuNowak2019KazhdanProjections}*{Theorem 7.6} is used to show that if $\Gamma\curvearrowright M$ is volume preserving and has a spectral gap then $\mathfrak G^{\mathbb N+1}$ is approximable via locally compact operators of finite propagation, and hence belongs to $\Croe{\iwcone M}$ (see also \cite{Sawicki2018GeometryMetric}*{Proposition 9.3}). This fact was extended to arbitrary strongly ergodic actions in \cite{LiVigoloZhang2023MarkovianRoealgebraic}*{Proposition 5.1}. In particular, in this setup the Roe algebra does contain the desired non-compact ghost projections. However, using $\mathfrak G^{\mathbb N+1}$ to prove that $\mu_c$ is not an isomorphism turned out to be much more delicate than anticipated.

\smallskip

There is one last result worth mentioning here. Namely, if one further restricts to the subspace $\wcone^{2^{\mathbb N}} M \subset \iwcone M$ (here and in the sequel $2^\naturals\coloneqq\braces{2^n\mid n\in \mathbb N}$)
then one can show that $\wcone^{2^{\mathbb N}} M$ does violate the coarse Baum--Connes Conjecture.
To see this, consider the associated Dru\k{t}u--Nowak projection $\mathfrak G^{2^{\mathbb  N}}\coloneqq m\otimes \id_{2^{\mathbb N}}$. Using the techniques of \cite{WillettYu2012HigherIndex}, the following can be shown:

\begin{theorem}[\cite{Sawicki2017WarpedCones}*{Theorem 3.5}, \cite{LiVigoloZhang2023MarkovianRoealgebraic}*{Theorem G}]\label{thm: old-counterexample}
 If $\Gamma\curvearrowright M$ is a free, strongly ergodic, Lipschitz action and $\Gamma$ has property A, then the K\=/theory class $[\mathfrak G^{2^{\mathbb  N}}]$ is non-zero and lies outside the image of the coarse assembly map $\mu_c$
\end{theorem}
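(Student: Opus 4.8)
The plan is to transplant the Willett--Yu obstruction for large-girth expanders \cite{WillettYu2012HigherIndex} to the warped setting, exactly as carried out in \cite{Sawicki2017WarpedCones} and \cite{LiVigoloZhang2023MarkovianRoealgebraic}. Write $Y_n\coloneqq\wcone^{2^n}M\cong M\times\{2^n\}$ for the level sets. The first observation is that $\wcone^{2^{\mathbb N}}M$ is coarsely equivalent to a coarse disjoint union of the $Y_n$ in the sense recalled above (cf.\ Example~\ref{exmp: expanders}): any path in $\wcone M$ joining level $2^n$ to level $2^m$ has length at least $\abs{2^m-2^n}$, since warping leaves the $\rpos$-coordinate unchanged, so $d(Y_n,Y_m)\to\infty$ as $\max(n,m)\to\infty$. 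Under this identification the Dru\k{t}u--Nowak projection $\mathfrak G^{2^{\mathbb N}}$ becomes the block-diagonal projection $\bigoplus_n P_n$, with $P_n$ the mean-value projection of $L^2Y_n$.

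Next I would record that $\{Y_n\}$ is a \emph{generalized expander}. Freeness, the Lipschitz hypothesis and strong ergodicity of $\Gamma\acts M$ together ensure that the averaging (Markov) operators attached to the $Y_n$ have a \emph{uniform} spectral gap (this is essentially the content of \cite{LiVigoloZhang2023MarkovianRoealgebraic}*{Proposition~5.1}, building on \cite{DrutuNowak2019KazhdanProjections}; see also \cite{Vigolo2019DiscreteFundamental}). Hence each $P_n$ is a uniform limit of polynomials of uniformly bounded degree in the corresponding Markov operator, so $\mathfrak G^{2^{\mathbb N}}=\bigoplus_nP_n$ is a norm-limit of locally compact finite-propagation operators, lies in $\Croe{\wcone^{2^{\mathbb N}}M}$, and is a non-compact ghost projection. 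Collapsing each block gives a $\ast$-homomorphism $\pi\colon\Croe{\wcone^{2^{\mathbb N}}M}\to\prod_n\mathcal K(L^2Y_n)\big/\bigoplus_n\mathcal K(L^2Y_n)$ (well defined because operators of bounded propagation are block-diagonal outside finitely many components), and composing $K_0(\pi)$ with the ``rank'' character $\prod_n\mathbb Z/\bigoplus_n\mathbb Z$ sends $[\mathfrak G^{2^{\mathbb N}}]$ to the class of $(1,1,1,\dots)\neq0$; in particular $[\mathfrak G^{2^{\mathbb N}}]\neq0$.

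For the assertion that $[\mathfrak G^{2^{\mathbb N}}]$ is not in the image of $\mu_c$, property A of $\Gamma$ plays the role that the large-girth hypothesis plays for expanders. Because the $R$-ball about any point of $Y_n$ is, for $n$ large, quasi-modelled on a bounded region of $\mathbb R^{\dim M}\times\Gamma$ (cf.\ \cite{deLaatVigolo2019SuperexpandersGroup,Vigolo2019DiscreteFundamental}), and $\mathbb R^{\dim M}\times\Gamma$ has property A since $\Gamma$ does, the family $\{Y_n\}$ admits a \emph{fibred coarse embedding} into a property A space: for each fixed scale $R$ the $R$-local structure of $\bigsqcup_nY_n$ embeds, with $n$-independent control, into such a space --- even though $\bigsqcup_nY_n$, being a generalized expander, does not coarsely embed at all. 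By the theory of fibred coarse embeddings \cite{Finn-Sell2014FibredCoarse} (see also \cite{WillettYu2020HigherIndex}) the \emph{maximal} coarse assembly map $\mu_{c,\mathrm{max}}\colon KX_\ast(\wcone^{2^{\mathbb N}}M)\to K_\ast\bigl(C^\ast_{\mathrm{max}}(\wcone^{2^{\mathbb N}}M)\bigr)$ is an isomorphism, while the uniform spectral gap assembles a genuine maximal projection $P_{\mathrm{max}}\in C^\ast_{\mathrm{max}}(\wcone^{2^{\mathbb N}}M)$ mapping onto $\mathfrak G^{2^{\mathbb N}}$ under the canonical surjection $\lambda\colon C^\ast_{\mathrm{max}}\to\Croe{}$ and detected, via $\pi\circ\lambda$, by the rank character above. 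A diagram chase then shows: if $[\mathfrak G^{2^{\mathbb N}}]$ were $\mu_c(\xi)=\lambda_\ast\mu_{c,\mathrm{max}}(\xi)$, then (using surjectivity of $\mu_{c,\mathrm{max}}$, as $[\mathfrak G^{2^{\mathbb N}}]=\lambda_\ast[P_{\mathrm{max}}]$ too) the class $[P_{\mathrm{max}}]$ itself would lie in the image of $\mu_{c,\mathrm{max}}$; but classes in that image are represented, through Yu's localization algebra, by operators of arbitrarily small propagation, which are annihilated by $\pi\circ\lambda$ --- contradicting that $\pi\circ\lambda$ detects $[P_{\mathrm{max}}]$. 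Hence $[\mathfrak G^{2^{\mathbb N}}]\notin\mu_c\bigl(KX_0(\wcone^{2^{\mathbb N}}M)\bigr)$, which re-proves the non-vanishing as well.

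The main obstacle I expect is this last step: extracting a fibred coarse embedding of the family $\{\wcone^{2^n}M\}$ from property A of $\Gamma$, and --- above all --- controlling the comparison between the maximal and reduced Roe algebras, together with the precise sense in which the image of the assembly map avoids the ``ghost part'' of $K$-theory, finely enough to run the ghost-versus-small-propagation dichotomy at the level of $K_0$. This is precisely where the large-girth assumption did its work in \cite{WillettYu2012HigherIndex}, and it is the technical heart of \cite{Sawicki2017WarpedCones} and \cite{LiVigoloZhang2023MarkovianRoealgebraic}; the earlier steps (coarse disjoint union, uniform spectral gap, the rank character) are comparatively routine once the dynamical input is in place.
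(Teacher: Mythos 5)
The first half of your argument is sound and coincides with the paper's: identifying $\wcone^{\expnat}M$ as a coarse disjoint union of the level sets, placing $\mathfrak G^{\expnat}$ in $\Croe{\wcone^{\expnat}M}$, and detecting it by the block-restriction homomorphism into $\prod_n\mathcal K/\bigoplus_n\mathcal K$ followed by the rank/trace character is exactly the paper's trace $\tau_{\rm d}$ with $\tau_{\rm d}([\mathfrak G^{\expnat}])=[(1,1,\dots)]\neq0$. (One caveat: strong ergodicity is strictly weaker than a uniform spectral gap of the Markov operators; what \cite{LiVigoloZhang2023MarkovianRoealgebraic} actually gives is that strong ergodicity suffices for $\mathfrak G^{\expnat}$ to lie in the Roe algebra, which is all you need here.)

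The second half has a genuine gap. Your diagram chase does not close: from $\lambda_\ast\mu_{c,\max}(\xi)=\lambda_\ast[P_{\max}]$ you cannot identify $\mu_{c,\max}(\xi)$ with $[P_{\max}]$ unless $\lambda_\ast$ is injective, which is precisely the point at issue, and the intermediate assertion that ``$[P_{\max}]$ lies in the image of $\mu_{c,\max}$'' is vacuous once you have declared $\mu_{c,\max}$ an isomorphism. More fatally, the claimed dichotomy ``classes in the image of the assembly map are represented by operators of arbitrarily small propagation, hence annihilated by $\pi\circ\lambda$'' is false: $\pi$ does not kill small-propagation operators. Take $p_n$ a rank-one projection supported in the unit ball around a point of $M\times\{2^n\}$; the block-diagonal projection $(p_0,p_1,\dots)$ has propagation zero, its class lies in the image of $\mu_c$, and your rank character sends it to $(1,1,\dots)\neq0$ --- this is exactly the class $[p_{\rm alt}]$ exploited in the paper's proof of Theorem~B. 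So the functional you built does \emph{not} vanish on the assembly image, and no contradiction arises. The actual mechanism, in the paper's sketch and in \cite{Sawicki2017WarpedCones} and \cite{LiVigoloZhang2023MarkovianRoealgebraic} following \cite{WillettYu2012HigherIndex}, is a two-trace comparison: property~A, freeness and the Lipschitz condition are used to construct a second trace $\tau_{\rm u}$ which vanishes on ghost projections, together with the nontrivial theorem that $\tau_{\rm d}$ and $\tau_{\rm u}$ agree on $\image(\mu_c)$; then $\tau_{\rm d}([\mathfrak G^{\expnat}])\neq0=\tau_{\rm u}([\mathfrak G^{\expnat}])$ excludes the class from the image and shows it is nonzero. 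A maximal-algebra/fibred-coarse-embedding route in the spirit of \cite{Finn-Sell2014FibredCoarse} can be made to work, but it requires a genuine analysis of $\ker\lambda_\ast$ and of where the assembly image sits relative to it, not the chase as written.
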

\begin{proof}[Idea of proof]
  Considering the restrictions of operators to the level sets $M\times\{2^n\}$ yields a $\ast$-homomorphism of $\Croe{\wcone}$ into $\prod_{\expnat}\mathcal K(L^2M)/\bigoplus_\expnat \mathcal K(L^2M)$, where $\mathcal K$ denotes the compact operators. Composing this with the canonical trace and taking K\=/theory results in a homomorphism
  \[
  \tau_{\rm d}\colon K_0(\Croe{\wcone})\to \frac{\prod_\expnat \mathbb R}{\bigoplus_\expnat \mathbb R}.
  \]
  The class of the Dru\k{t}u--Nowak projection $\mathfrak G^{2^{\mathbb  N}}$ is mapped to $[(1,1,1,\ldots)]$, that is $\tau_{\rm d}([\mathfrak G^{2^{\mathbb  N}}])=[(1,1,1,\ldots)]\neq 0$.
  
  Freeness, the Lipschitz condition and property A can be used to define another trace $\tau_{\rm u}$ that vanishes on ghost projections, and hence on $[\mathfrak G^{2^{\mathbb  N}}]$. However, it can be shown that $\tau_{\rm d}$ and $\tau_{\rm u}$ coincide on the image of the coarse assembly map.
\end{proof}

In the above, there is nothing special about the set $\expnat$. What one really needs is to consider a set of the form $A=\bigsqcup_{n\in\mathbb N} A_n$ that is a union of bounded non-empty sets with $d(A_n,A_m)\to\infty$ \cite{Sawicki2018GeometryMetric}. Namely, for those techniques to work it
is necessary that $\wcone^AM\subset \wcone M$ is a coarse disjoint union of
bounded collections of level sets. This implies that we are once again in a situation analogous to that of \cref{exmp: expanders}, which is a somewhat underwhelming result. It would have been much better to show that $[\mathfrak G^{\mathbb N+1}]$ is not in $\im(\mu_c)$. However, we shall presently see that, in fact, $[\mathfrak G^{\mathbb N+1}]=0$.

\section{Vanishing of K-class of  projections}\label{sec: vanishing proof}

To prove the vanishing result \cref{thm:vanishing}, it is helpful to slightly recast the construction of the integral Dru\k{t}u--Nowak projection $\mathfrak G^{\mathbb N+1}$ working directly on $\wcone M$, and avoiding to pass to the space of integer level sets.
For every $n\in\npos$,  choose some $\xi_n\in L^2\mathbb R$ of norm one and supported on $[n,n+1]$. Note that these functions are pairwise orthogonal. Let $q\in \mathfrak B(L^2\rpos)$ be the projection on their closed span $\overline{\angles{\xi_n\mid n\in \npos}}$.

Observe now that the image of $\id_{L^2 M}\otimes q$ can be naturally identified with \linebreak $L^2M\otimes\ell^2(\npos)$. 
In other words, choosing $\xi_n$ as above defines an isometric embedding $L^2M\otimes\ell^2(\npos)\hookrightarrow L^2(M\times \rpos)$. 
In turn, this yields a $\ast$-embedding  $\mathcal B(M\otimes\ell^2\npos)\hookrightarrow \mathcal B(L^2(M\times \rpos))$, which is easily seen to map $\Croe{\iwcone M}$ into $\Croe{\wcone M}$.

\begin{remark}
    A more sophisticated way of phrasing this is that $\id_{L^2 M}\otimes q$ is a submodule of $L^2(M\times\rpos)$, whose associated Roe algebra is naturally identified with $\Croe{\iwcone M}$ \cite{MartinezVigolo2024RigidityFramework}.
\end{remark}

Now, as explained in \cref{sec: Background}, one can show that if
$\Gamma\curvearrowright M$ is strongly ergodic, then the integral Dru\k{t}u--Nowak projection is in $\Croe{\iwcone M}$. In turn, its image under the embedding of $\Croe{\iwcone M}$ into $\Croe{\wcone M}$ is a non-compact ghost projection in $\Croe{\wcone M}$, which is easily seen to be nothing but the projection $m\otimes q$.

\medskip

Up to this point we have done nothing new. There is however a better argument to show that $m\otimes q$ belongs to the Roe algebra. In \cites{Sawicki2017WarpedCones,LiVigoloZhang2023MarkovianRoealgebraic} it takes some effort to show that $\mathfrak G^{\mathbb N+1}$ belongs to $\Croe{\iwcone M}$ by explicitly constructing an approximation via locally compact finite propagation operators (i.e.~using the definition of Roe algebra directly). On the other hand, it is now known that the Roe algebra of a proper metric space can also be defined as the intersection
\begin{equation}\label{eq: roe as intersection}
    \Croe{X} =\Cfp{X}\cap \Clc{X}
\end{equation}
(one containment is obvious, the other is proved in \cite{BragaVignati2023GelfandtypeDualityxo}*{Proposition 2.1} and \cite{MartinezVigolo2023RoelikeAlgebras}*{Theorem 6.20}). This fact can be combined with the following simple lemma.

\begin{lemma}\label{lem: tensors in Roe}
    Let $t\in \mathcal B(L^2 M)$ and $s\in\mathcal B(L^2\rpos)$.
    \begin{enumerate}
        \item\label{item: product} If $t\otimes\id_{L^2\rpos}$ and $\id_{L^2 M}\otimes s$ belong to $\Cfp{\wcone M}$, then $t\otimes s\in \Cfp{\wcone M}$.
        \item\label{item: fp} If $s\in \Cfp{L^2\rpos}$, then $\id_{L^2 M}\otimes s\in \Cfp{\wcone M}$.
        \item\label{item: lc} If $t\in \mathcal K(L^2 M)$ and $s\in\Clc{L^2\rpos}$, then $t\otimes s\in\Clc{\wcone M}$.
    \end{enumerate}
\end{lemma}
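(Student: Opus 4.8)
The plan is to verify each of the three statements directly from the definitions of finite propagation and local compactness, unwinding the tensor-product structure of the module $L^2(M\times\rpos)\cong L^2 M\otimes L^2\rpos$ and the fact that the representation of $C_0(\wcone M)$ is by pointwise multiplication. The key geometric input is the trivial distance estimate $d_{\cone M}\geq d_{\rpos}$ on the $\rpos$-coordinate and $d_{\cone M}\big((x,t),(x',t)\big)\geq$ (something comparable to $t\cdot d_M(x,x')$ on a fixed level set), which survives warping in the $\rpos$-direction (warping only shortens distances along orbits, which lie inside level sets). Throughout I would approximate general elements of $C_0$ by finite sums $\sum f_i\otimes g_i$ with $f_i\in C_0(M)$, $g_i\in C_0(\rpos)$, which is legitimate since such sums are dense and the conditions defining $\Cfp$ and $\Clc$ pass to norm limits appropriately.

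For \ref{item: product}: suppose $t\otimes\id$ has propagation $\leq R_1$ and $\id\otimes s$ has propagation $\leq R_2$ (as elements of $\Cfp{\wcone M}$, meaning the propagation of approximants is controlled; one may first assume $t\otimes\id$ and $\id\otimes s$ genuinely have finite propagation and then take limits). I would show $t\otimes s=(t\otimes\id)(\id\otimes s)$ has propagation $\leq R_1+R_2$ by the standard triangle-inequality argument: if $\rho(f)(t\otimes s)\rho(g)\neq 0$ then inserting a partition of unity $\sum h_k$ subordinate to a cover of $\wcone M$ by sets of diameter $\leq 1$, some $\rho(f)(t\otimes\id)\rho(h_k)\neq 0$ and $\rho(h_k)(\id\otimes s)\rho(g)\neq 0$, forcing $d(\supp f,\supp g)\leq R_1+1+R_2$. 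So $t\otimes s\in\Cfp{\wcone M}$.

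For \ref{item: fp}: if $s$ has propagation $\leq R$ in $\Cfp{L^2\rpos}$ — i.e.\ $\rho_{\rpos}(g)s\rho_{\rpos}(g')=0$ whenever $d_{\rpos}(\supp g,\supp g')>R$ — then for $f,f'\in C_0(M)$ and $g,g'\in C_0(\rpos)$ we have $\rho(f\otimes g)(\id\otimes s)\rho(f'\otimes g')=(ff')\otimes(\rho(g)s\rho(g'))$ (interpreting $ff'$ as a multiplication operator on $L^2 M$, which may be $0$). Since $d_{\wcone M}\geq d_{\rpos}$ on the cone coordinate (projection to $\rpos$ is $1$-Lipschitz, even after warping), a pair of sets in $\wcone M$ at distance $>R$ projects to sets in $\rpos$ at distance $>R$, so the relevant $g,g'$ components are $R$-separated and the product vanishes; hence $\id\otimes s$ has propagation $\leq R$. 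Taking limits handles general $s\in\Cfp{L^2\rpos}$.

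For \ref{item: lc}: I must show $\rho(\varphi)(t\otimes s)$ and $(t\otimes s)\rho(\varphi)$ are compact for every $\varphi\in C_0(\wcone M)$. Approximating $\varphi$ by $\sum f_i\otimes g_i$, it suffices to treat $\rho(f\otimes g)(t\otimes s)=(f t)\otimes(g s)$ with $f\in C_0(M)$, $g\in C_0(\rpos)$. Here $ft\in\mathcal K(L^2 M)$ since $t\in\mathcal K(L^2 M)$ and the compacts form an ideal, and $gs=\rho_{\rpos}(g)s$ is compact since $s\in\Clc{L^2\rpos}$; a tensor product of a compact and a compact operator is compact, so the product is compact, and norm-limits of compacts are compact. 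The symmetric statement is identical. This is the one place where the hypothesis $t\in\mathcal K(L^2 M)$ (rather than merely locally compact) is genuinely used — on the non-discrete manifold $M$ every bounded multiplication operator is already ``locally compact'' only in the trivial sense, so we need honest compactness of $t$ to feed into the tensor-compactness step.

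The main obstacle I anticipate is purely bookkeeping: in \ref{item: product} and \ref{item: fp} one works with norm-limits of finite-propagation operators rather than finite-propagation operators on the nose, so the propagation bounds must be phrased for the approximants and one checks the approximant of the product is the product of approximants; and one must be a little careful that the multiplication operators $ff'$, $ff_i$ etc.\ on $L^2 M$ genuinely commute past the tensor factors, which is automatic here because everything is a pointwise-multiplication representation on a product module. None of this is deep, but it is where the proof spends its words.
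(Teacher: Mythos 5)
Your treatments of \eqref{item: product} and \eqref{item: lc} are correct, if more laborious than needed: the paper disposes of \eqref{item: product} by simply noting that $t\otimes s=(t\otimes\id_{L^2\rpos})(\id_{L^2 M}\otimes s)$ is a product of two elements of the $C^*$-algebra $\Cfp{\wcone M}$, and proves \eqref{item: lc} by multiplying with cutoffs $1\otimes g_n$, $g_n\in C_0(\rpos)$ equal to $1$ on $[1,n]$, rather than decomposing $\varphi$ into elementary tensors; your Stone--Weierstrass variant of \eqref{item: lc} is equally valid. The genuine problem is in \eqref{item: fp}. The step ``a pair of sets in $\wcone M$ at distance $>R$ projects to sets in $\rpos$ at distance $>R$'' is false, and it does not follow from the ($1$-Lipschitz) estimate $d_{\rpos}\circ\pi\leq \delta^\Gamma$ you cite --- that inequality gives exactly the converse implication ($\rpos$-separation forces warped separation). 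Counterexample: two subsets of a single level set $M\times\{T\}$ which are far apart in the warped metric for large $T$ project to the same point $T\in\rpos$. Accordingly, your conclusion ``the relevant $g,g'$ components are $R$-separated'' is also false as stated: when $ff'=0$ nothing separates the $t$-supports.

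The statement is nevertheless true, and the correct geometric input is the \emph{reverse} inequality $\delta^\Gamma\leq d_{\cone M}$: two points $(x,t),(x,t')$ lying over the \emph{same} $x\in M$ satisfy $\delta^\Gamma((x,t),(x,t'))\leq d_{\cone M}((x,t),(x,t'))=\abs{t-t'}$, since warping only adds shortcuts. One then argues via a dichotomy on elementary tensors: in $\rho(f\otimes g)(\id_{L^2M}\otimes s)\rho(f'\otimes g')=(ff')\otimes(\rho(g)s\rho(g'))$, if the term does not vanish then $ff'\neq 0$, so there is a common $x\in\supp f\cap\supp f'$, and $\rho(g)s\rho(g')\neq 0$, which for $s$ of propagation $\leq R$ yields $t\in\supp g$, $t'\in\supp g'$ with $\abs{t-t'}\leq R$; then $(x,t)$ and $(x,t')$ witness that the supports of $f\otimes g$ and $f'\otimes g'$ are at warped distance $\leq R$. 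Contrapositively, warped separation $>R$ kills every term, so $\id_{L^2M}\otimes s$ has propagation $\leq R$, and norm limits handle general $s\in\Cfp{L^2\rpos}$. This is precisely what the paper's one-line remark that \eqref{item: fp} ``follows from the definition of the warped metric'' is hiding; with this repair your proof goes through.
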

\begin{proof}
    For \eqref{item: product} it suffices to write $t\otimes s = (t\otimes\id_{L^2\rpos})(\id_{L^2 M}\otimes s)$.
    \eqref{item: fp} follows easily from the definition of the warped metric.
    For \eqref{item: lc}, any function $f\in C_0(\wcone M)$ is the limit of
    the functions $f_n\coloneqq (1\otimes g_n)f$, where $g_n\in C_0(\rpos)$ is
    chosen to be equal to $1$ on $[1,n]$. Then
    \[
        (t\otimes s)\rho(f)
        =\lim_{n\in \mathbb N}(t\otimes s)\rho(1\otimes g_n)\rho(f)
        =\lim_{n\in \mathbb N}(t\otimes s\rho(g_n))\rho(f)
    \]
    is compact, and the same applies to $\rho(f)(t\otimes s)$ as well.
\end{proof}

\begin{corollary}\label{cor: Dr-No are in Roe}
    If $p\in \mathcal K(L^2M)$, $q\in \Croe{\rpos}$ and $p\otimes\id_{L^2\rpos}\in\Cfp{\wcone M}$, then $p\otimes q\in \Croe{\wcone M}$.
\end{corollary}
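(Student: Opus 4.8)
The plan is to deduce the statement purely formally from \cref{lem: tensors in Roe} together with the description of the Roe algebra as an intersection, \eqref{eq: roe as intersection}. Recall that \eqref{eq: roe as intersection} applies to \emph{every} proper metric space and every ample module; in particular it applies to $\wcone M$ with the module $L^2M\otimes L^2\rpos$, giving $\Croe{\wcone M}=\Cfp{\wcone M}\cap\Clc{\wcone M}$, and to $\rpos$ with the module $L^2\rpos$, giving $\Croe{\rpos}=\Cfp{L^2\rpos}\cap\Clc{L^2\rpos}$. The latter means the hypothesis $q\in\Croe{\rpos}$ splits for free into the two facts $q\in\Cfp{L^2\rpos}$ and $q\in\Clc{L^2\rpos}$. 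So it suffices to verify separately that $p\otimes q\in\Cfp{\wcone M}$ and that $p\otimes q\in\Clc{\wcone M}$.

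For the finite-propagation part: since $q\in\Cfp{L^2\rpos}$, part \eqref{item: fp} of \cref{lem: tensors in Roe} gives $\id_{L^2M}\otimes q\in\Cfp{\wcone M}$; combining this with the standing hypothesis $p\otimes\id_{L^2\rpos}\in\Cfp{\wcone M}$, part \eqref{item: product} of \cref{lem: tensors in Roe} yields $p\otimes q=(p\otimes\id_{L^2\rpos})(\id_{L^2M}\otimes q)\in\Cfp{\wcone M}$. For the local-compactness part: since $p\in\mathcal K(L^2M)$ and $q\in\Clc{L^2\rpos}$, part \eqref{item: lc} of \cref{lem: tensors in Roe} directly gives $p\otimes q\in\Clc{\wcone M}$. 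Intersecting the two conclusions and applying \eqref{eq: roe as intersection} once more gives $p\otimes q\in\Cfp{\wcone M}\cap\Clc{\wcone M}=\Croe{\wcone M}$, which is what we want.

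I do not expect a genuine obstacle here: the corollary is a bookkeeping combination of \cref{lem: tensors in Roe} with \eqref{eq: roe as intersection}. The only point that needs a modicum of care is the compatibility of the ample geometric modules across the algebras involved, so that the factorization $p\otimes q=(p\otimes\id_{L^2\rpos})(\id_{L^2M}\otimes q)$ literally makes sense inside $\mathcal B(L^2M\otimes L^2\rpos)$ and so that $\Cfp{\wcone M}$, $\Clc{\wcone M}$, $\Croe{\wcone M}$ are all understood with respect to the module $L^2M\otimes L^2\rpos$ (and the $\rpos$-side algebras with respect to $L^2\rpos$); this is exactly the convention fixed in \cref{sec: Background}, so no additional argument is required.
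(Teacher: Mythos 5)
Your proof is correct and is exactly the argument the paper intends: the corollary is stated without proof precisely because it follows by splitting $q\in\Croe{\rpos}$ via \eqref{eq: roe as intersection}, feeding the pieces into parts \eqref{item: fp}, \eqref{item: product} and \eqref{item: lc} of \cref{lem: tensors in Roe}, and intersecting again. Your remark on the easy containment $\Croe{\rpos}\subseteq\Cfp{L^2\rpos}\cap\Clc{L^2\rpos}$ and on fixing the ample modules $L^2M\otimes L^2\rpos$ and $L^2\rpos$ is the only care needed, and you handled it as the paper does.
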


If $q$ is the projection onto $\overline{\angles{\xi_n\mid n\in \npos}}$ as above, then $q\in \Croe{\rpos}$.
The condition $p\otimes\id_{L^2\rpos}\in\Cfp{\wcone M}$ is exactly what was
verified by Dru\k{t}u--Nowak for $\mathfrak G = m\otimes\id_{L^2\rpos}$ under
the spectral gap assumption, and the above then implies that the integral
Dru\k{t}u--Nowak projection $m\otimes q$ does indeed belongs to the Roe
algebra.

It is now worth spending a few words on the proof that
$m\otimes\id_{L^2\rpos}$ is in $\Cfp{\wcone M}$. An operator $t \in \mathcal
B(L^2M)$ is said to have \emph{finite dynamical propagation} if there is some
$R\geq 0$ such that $\rho(f)t\rho(g)=0$ for every $f,g\in C(M)$ with $
supp(f)\cap \gamma\cdot\supp(g)=\emptyset$ for every $\gamma\in \Gamma$ of
word-length at most $R$
\cites{LiVigoloZhang2023MarkovianRoealgebraic,deLaatVigoloWinkel2023DynamicalPropagation}.
Define:
\[
\Cfp{\Gamma\curvearrowright M}\coloneqq\overline{\braces{t\in \mathcal B(L^2 M)\text{ of finite dynamical propagation}}}.
\]
It is clear from the definition of the warped metric that if $t$ has dynamical propagation at most $R$, then $t\otimes\id_{L^2\rpos}$ has propagation at most $R$ in $\wcone M$. In particular, the mapping $t\mapsto t\otimes\id_{L^2\rpos}$ defines a $\ast$-embedding
\[
\Cfp{\Gamma\curvearrowright M}\hookrightarrow \Cfp{\wcone M}.
\]
It is shown in \cites{LiVigoloZhang2023MarkovianRoealgebraic} that if $\Gamma\curvearrowright M$ is a continuous action, then it is strongly ergodic if and only if $m\in \Cfp{\Gamma\curvearrowright M}$.

Inspired by this discussion, we make the following:

\begin{definition}\label{def: generalized Dr-No}
    A \emph{generalized Dru\k{t}u--Nowak projection} is a non-compact projection of the form $p\otimes q$, where $p\in \Cfp{\Gamma\curvearrowright M}\cap \mathcal K(L^2M)$ and $q\in \Croe{\rpos}$.
\end{definition}

By \cref{cor: Dr-No are in Roe}, every generalized Dru\k{t}u--Nowak projection is a non-compact ghost projection in the Roe algebra of $\wcone M$, and \cites{DrutuNowak2019KazhdanProjections,LiVigoloZhang2023MarkovianRoealgebraic} show that every strongly ergodic action gives rise to generalized Dru\k{t}u--Nowak projections.

It is not hard to see and proven in \cite{deLaatVigoloWinkel2023DynamicalPropagation} that if $\Gamma\curvearrowright M$ is an ergodic action then $\Cfp{\Gamma\curvearrowright M}$ is an irreducible $C^*$-algebra (i.e.~there are no non-trivial $\Cfp{\Gamma\curvearrowright M}$-invariant closed subspaces of $L^2 M$).
On the other hand, it is well known that if $A\leq \mathcal B(\mathcal H)$ is an irreducible $C^*$-algebra such that $\mathcal K(\mathcal H)\cap A\neq \{0\}$, then $\mathcal K(\mathcal H)\subseteq A$. With this at hand, it is now clear how to prove \cref{thm:vanishing}:

\begin{proof}[Proof of \cref{thm:vanishing}]
    Let $p\otimes q$ be a generalized Dru\k{t}u--Nowak projection. By assumption, $\Cfp{\Gamma\curvearrowright M}\cap \mathcal K(L^2M)\neq \{0\}$. Since the action is ergodic, $\Cfp{\Gamma\curvearrowright M}$ is irreducible, and hence $\mathcal K(L^2M)\leq \Cfp{\Gamma\curvearrowright M}$.
    By \cref{cor: Dr-No are in Roe}, it then follows that $\mathcal K(L^2M)\otimes\Croe{\rpos}$ is contained in $\Croe{\wcone M}$.
    
    On the other hand, tensoring with the compacts does not change the K\=/theory, hence
    \[
    K_*(\mathcal K(L^2M)\otimes\Croe{\rpos}) = K_*(\Croe{\rpos}) ,
    \]
    and the latter is $\{0\}$ because $\rpos$ is a flasque space \cite{Roe1996IndexTheory}*{Proposition 9.4}.
    Since $p\otimes q \in \mathcal K(L^2M)\otimes\Croe{\rpos}$, and by
    naturality of K\=/theory of $C^*$-algebras, it follows that
    \begin{equation*}
      [p\otimes q]\in
      K_0(\Croe{\wcone M})
    \end{equation*}
    lies in the image of $K_0(\mathcal K(L^2(M)\otimes \Croe{\rpos}))=\{0\}$ and hence
    vanishes.
\end{proof}

\begin{corollary}
    If $\Gamma\curvearrowright M$ is an action as by \cref{conj:roe}, then $\mathfrak G^{\mathbb N+1}$ vanishes in K\=/theory, and hence belongs to the image of the coarse assembly map.
\end{corollary}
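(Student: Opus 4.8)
The plan is to observe that this corollary is nothing but a direct application of \cref{thm:vanishing}, once one checks that the hypotheses of \cref{conj:roe} feed into it. So the first step is to extract from the assumptions what \cref{thm:vanishing} needs. The action $\Gamma\curvearrowright M$ is measure-preserving with a spectral gap; in particular it is strongly ergodic, hence ergodic, which is exactly the standing hypothesis of \cref{thm:vanishing}. Moreover, as recalled in \cref{sec: Background}, the spectral gap assumption is precisely what Dru\k{t}u--Nowak used (\cite{DrutuNowak2019KazhdanProjections}*{Theorem 7.6}; see also \cite{LiVigoloZhang2023MarkovianRoealgebraic}) to conclude $\mathfrak G = m\otimes\id_{L^2\rpos}\in\Cfp{\wcone M}$, equivalently $m\in\Cfp{\Gamma\curvearrowright M}$. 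Since the mean-value projection $m\colon L^2M\to\mathbb C$ has rank one, it is compact, so $m\in\Cfp{\Gamma\curvearrowright M}\cap\mathcal K(L^2M)$ and $m\neq 0$.

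The second step is to match $\mathfrak G^{\mathbb N+1}$ with a generalized Dru\k{t}u--Nowak projection. As explained at the beginning of \cref{sec: vanishing proof}, transporting $\mathfrak G^{\mathbb N+1}$ along the embedding $\Croe{\iwcone M}\hookrightarrow\Croe{\wcone M}$ identifies it with $m\otimes q$, where $q\in\Croe{\rpos}$ is the projection onto $\overline{\angles{\xi_n\mid n\in\npos}}$. Since $q$ has infinite rank, $m\otimes q$ is non-compact, so $m\otimes q$ is a generalized Dru\k{t}u--Nowak projection in the sense of \cref{def: generalized Dr-No}. Applying \cref{thm:vanishing} then gives $[\mathfrak G^{\mathbb N+1}]=[m\otimes q]=0$ in $K_0(\Croe{\wcone M})\cong K_0(\Croe{\iwcone M})$.

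Finally, the assertion that the class then lies in the image of the coarse assembly map is immediate: $\mu_c$ is a group homomorphism, so $0=\mu_c(0)\in\im(\mu_c)$. I do not expect any genuine obstacle here; the only point requiring a moment's care is the book-keeping in the first step, namely that ``spectral gap'' in \cref{conj:roe} simultaneously supplies ergodicity and membership $m\in\Cfp{\Gamma\curvearrowright M}$ --- but both of these are exactly the facts recalled in \cref{sec: Background}, so the corollary follows formally.
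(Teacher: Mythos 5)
Your argument is correct and is exactly the route the paper intends: the corollary is stated without proof precisely because it follows from \cref{thm:vanishing} together with the facts recalled in \cref{sec: Background} and at the start of \cref{sec: vanishing proof} (spectral gap $\Rightarrow$ strong ergodicity $\Rightarrow$ ergodicity and $m\in\Cfp{\Gamma\curvearrowright M}$, and the identification of $\mathfrak G^{\mathbb N+1}$ with the generalized Dru\k{t}u--Nowak projection $m\otimes q$). Your final observation that the zero class trivially lies in $\im(\mu_c)$ is likewise the intended reading.
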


\subsection{Proof of vanishing by a direct Eilenberg swindle}\label{ssec: swindle}

After realizing that $[\mathfrak G]$ vanishes in $K_0(\Croe{\wcone M})$, it is not hard to find several proofs of this fact. For instance, we can show it using an Eilenberg swindle argument similar to the proof of \cite{WillettYu2020HigherIndex}*{Proposition 7.5.2}.

In order to do so, we begin by choosing a very ample geometric module.
Namely, we set $\mathcal{H}_\infty \coloneqq \bigoplus_{n\in\naturals}
L^2(M\times \npos)$.
For every $n\in \naturals$, we also let
\[W_n \colon L^2(M\times \npos)  \longrightarrow \mathcal{H}_{\infty}\] to be
the isometry mapping $L^2(M\times \npos)$ into the $n$-th summand. Both
$L^2(M\times \npos)$ and $\mathcal H_\infty$ are ample geometric modules for
$\wcone^\naturals M$, and can hence be used to construct the Roe algebra of
$\wcone^\naturals M$. Moreover the conjugation $\Ad_{W_0}(s)\coloneqq
{W_0}s{W_0}^*$ induces an isomorphism in K-theory
\[
(\Ad_{W_0})_\ast \colon K_\ast(\Croe{L^2(M\times \npos)})\xrightarrow{\ \cong\ } K_\ast(\Croe{\mathcal H_\infty})
\]
because $W_0$ is an isometry covering the identity map of $\wcone^\naturals M$.

Let
\(
S_n \colon L^2(M\times\npos)\to L^2(M\times\npos)
\)
denote the isometry defined by the shift
\[
(S_n\xi)(x,t)\coloneqq
\left\{
\begin{array}{ll}
  \xi(x, t - n) & t\geq n\\
  0 & \text{else}.
\end{array}
\right.
\]
We consider the $\ast$-homomorphism $\Phi\colon \mathcal B(L^2(M\times\npos))\to
\mathcal B(\mathcal H_\infty)$ given by the sum
\[
\Phi\coloneqq \sum_{n\in\naturals} \Ad_{W_nS_n}.
\]

It is emphatically not the case that $\Phi$ restricts to a homomorphism of Roe
algebras, because $\Ad_{S_n}$ will increase the propagation as $n$ grows.
However, if $\mathfrak G^\naturals$ is in $\Croe{L^2(M\times\npos)}$ (i.e.\ the action is strongly ergodic), then $\Phi(\mathfrak
G^\naturals)$ is in $\Croe{\mathcal H_\infty}$. This is once again very easy to
see using the description of the Roe algebra as an intersection \eqref{eq: roe
as intersection}, but it can also be verified by hand: suppose that $\mathfrak
G^\naturals = \lim_{k\to\infty}s_k$ with $s_k\in\mathcal B(L^2(M\times\npos))$
locally compact and of finite propagation. Let $s_k^{(n)}\in \mathcal B(\mathcal
H_\infty)$ to be the operator that coincides with $W_ns_kW_n^*$ on
$\bigoplus_\naturals L^2(M\times \naturals_{\geq n})$ and that is zero on
$\bigoplus_\naturals L^2(M\times \naturals_{< n})$. Then we see that
\[
\Phi(\mathfrak G^\naturals) = \lim_{k\to\infty}\left(\sum_{n\in\naturals} s_k^{(n)}\right),
\]
and the infinite sums on the RHS define locally compact operators of finite
propagation for every $k\in \naturals$.

At this point we are essentially done. In fact, we observe that
\[
\Phi(\mathfrak G^\naturals) = \mathfrak G^\naturals + \Ad_V(\Phi(\mathfrak G^\naturals)),
\]
where $V\colon\mathcal H_\infty\to\mathcal H_\infty$ is the isometry sending a
function $\xi$ contained in the $n$\=/th copy of $L^2(M\times\npos)$ to the
shifted function $S_1(\xi)$ in the $(n+1)$-th copy of $L^2(M\times\npos)$. Since
$V$ covers the identity of $\wcone^\naturals M$, we deduce that
\[
 [\Phi(\mathfrak G^\naturals)] = [\mathfrak G^\naturals] + [\Ad_V(\Phi(\mathfrak G^\naturals))]
 = [\mathfrak G^\naturals] + [\Phi(\mathfrak G^\naturals)],
\]
hence $[\mathfrak G^\naturals]$ must be zero.

\begin{remark}
  Ulrich Bunke has explained to us a ``motivic'' approach to prove a general
  vanishing result for projections. It relies entirely on properties of the
  semi-additive category of coarse spaces with transfer (for branched
  covering projections) of \cite{BEKWtr} and therefore holds for every coarse
  cohomology theory with transfer.
 The Eilenberg-swindle argument we outlined above can be seen as a special case of this more general approach.
\end{remark}

\section{A Mayer--Vietoris argument}\label{sec: Mayer--Vietoris}

\subsection{Setup}\label{ssec:coarse homology setup}
The main technical computation of this paper is a Mayer--Vietoris argument for coarse homology theories. There are several ways of formalizing what a coarse homology theory is, see e.g.~\cites{mitchener2001coarse,wulff2022equivariant,BunkeEngel2020HomotopyTheory}. Rather than committing to one definition over another, we list below the axioms that we will need for our computation.
We need $HX_\ast$ to be a functor from the category of proper metric spaces and controlled proper functions to the category of graded abelian groups such that the following hold:
\begin{itemize}
  \item $HX_\ast([0,\infty))=\{0\}$ (flasqueness axiom);
  \item If $X = A\cup B$ is a \emph{(coarsely) excisive pair} then there is a natural Mayer--Vietoris long exact sequence
  \[
  \begin{tikzcd}
    \cdots \ar{r} &[-1.5em]
    HX_\ast(A\cap B)\ar{r}{({\iota_A}_\ast,{\iota_B}_\ast)}
    &[0em]
    HX_\ast(A)\oplus HX_\ast(B) \ar{r}{{j_A}_\ast - {j_B}_\ast}
    \ar[draw=none]{d}[name=X, anchor=center]{}
    &[0em]
    HX_\ast(X)
    \ar[rounded corners,
      to path={ -- ([xshift=2ex]\tikztostart.east)
        |- (X.center) \tikztonodes
        -| ([xshift=-2ex]\tikztotarget.west)
        -- (\tikztotarget)}]{dll}[at end]{\partial}
    \\
    &
    HX_{\ast-1}(A\cap B)\ar{r}
    & \cdots\hspace{10em}
    &
  \end{tikzcd}
  \]
  \item If $f,g\colon X\to Y$ are \emph{coarsely homotopic} then $f_\ast=g_\ast$.
\end{itemize}
In the above, $X = A\cup B$ is \emph{excisive} if for every $r>0$ there is some $R>0$ such that $N_r(A)\cap N_r(B)\subseteq N_R(A\cap B)$, where $N_r$ denotes the $r$-neighbourhood. In more sophisticated words, the pair $A,B$ is excisive if $A\cap B$ is (a representative of) their coarse intersection \cite{LeitnerVigolo2023InvitationCoarse}.

More delicate is the definition of coarse homotopy, as there are several such notions in the literature. For instance, in \cite{higson1994homotopy}*{Definition 1.2} two (continuous) maps $f,g$ are called coarsely homotopic if:

\noindent\begin{minipage}{4em}
  \centering
  $(\star)$
\end{minipage}\begin{minipage}{\textwidth -4em}
  \medskip
  there is a continuous proper map $h\colon X\times [0,1]\to Y$ such that
  $h$ restricts to $f$ and $g$ at $s=0$ and $s=1$, respectively, and 
  the sections $h(\variable,s)$ are equicontrolled as $s\in[0,1]$ varies (i.e.~for every $r$ there is $R$ such that $d(x,x')\leq r\implies d(h(x,s),h(x',s))\leq R$).
  \medskip
\end{minipage}
(this definition is cleverly extended in  \cite{wulff2022equivariant}*{Definition 2.16}).
Another definition is that of \cite{BunkeEngel2020HomotopyTheory}*{Definition 4.17} (which extends \cite{HigsonPedersenRoe1997AlgebrasControlled}*{Definition 11.1}). There, $f,g$ are called coarsely homotopic if:

\noindent\begin{minipage}{4em}
  \centering
  $(\star\star)$
\end{minipage}\begin{minipage}{\textwidth -4em}
  \medskip
  there are bornological functions $\rho_\pm\colon X\to [0,\infty)$ and a controlled map $H\colon X\times [0,\infty)\to Y$ such that
  \begin{itemize}
    \item if $s\leq\rho_-(x)$ (resp.\ $s\geq\rho_+(x)$) then $H(x,s)=f(x)$ (resp.\ $H(x,s)=g(x)$)
    \item the restriction of $H$ to $\{(x,s)\mid \rho_-(x)\leq s\leq \rho_+(x)\}$ is proper.
  \end{itemize}
  \medskip
\end{minipage}
Several other variants of these definitions can be found in the literature,
compare e.g.~\cite[Section 2]{MNS}. Fortunately, the homotopies we are going to need (\cref{lem:coarse homotopy equivalence}) are easily seen to be coarse homotopies with respect to any of those definitions.

\subsection{Mayer--Vietoris on warped cones}
For the rest of this section the action $\Gamma\curvearrowright M$ is fixed. For
ease of notation, we thus write $\wcone^A$ in place of $\wcone^A M$.
We observe the following:

\begin{lemma}\label{lem:excisive is excisive}
  If $A = A_1\cup A_2\subseteq \rpos$ is an excisive pair, then so is $\wcone^A = \wcone^{A_1}\cup\wcone^{A_2}$.
\end{lemma}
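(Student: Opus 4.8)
The plan is to reduce the statement to one elementary property of the warped metric, namely that warping cannot shorten the cone coordinate. Precisely, I would first record that the projection $\pi\colon\wcone M\to\rpos$, $(x,t)\mapsto t$, is $1$-Lipschitz, and that for every fixed $x\in M$ the section $t\mapsto(x,t)$ is an isometric embedding of $\rpos$ into $\wcone M$; equivalently, $\abs{t-t'}\le\delta^\Gamma\bigl((x,t),(x',t')\bigr)$ for all $x,x'\in M$, $t,t'\in\rpos$, with equality whenever $x=x'$. The inequality is standard: a path approximating the warped distance between $(x,t)$ and $(x',t')$ is a concatenation of segments inside $\cone M$ and of ``jumps'' of length at most $1$ along the $\Gamma$-action, and since the jumps leave the cone coordinate unchanged while a segment in $\cone M$ has length at least the total variation of its $t$-coordinate (because $t^2\varrho+\dd t^2\ge\dd t^2$), summing over the segments of the path yields $\abs{t-t'}$ as a lower bound. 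The reverse inequality for $x=x'$ follows from $\delta^\Gamma\le d_{\cone M}$ together with the observation that the vertical segment $s\mapsto(x,s)$ in $\cone M$ has length exactly $\abs{t-t'}$. All of this is contained in the standard references on warped cones \cites{Roe2005WarpedCones,Vigolo2018GeometryActions,Sawicki2018GeometryMetric}, so I would only cite it.

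With that in hand the lemma is a short computation. I would first note that $\wcone^A=\wcone^{A_1}\cup\wcone^{A_2}$ since $A=A_1\cup A_2$, and that $\wcone^{A_1}\cap\wcone^{A_2}=M\times(A_1\cap A_2)=\wcone^{A_1\cap A_2}$, so $\wcone^{A_1\cap A_2}$ is the candidate coarse intersection. Fix $r>0$ and let $R>0$ be the constant witnessing excisiveness of the pair $A=A_1\cup A_2$ in $\rpos$, so that $N_r(A_1)\cap N_r(A_2)\subseteq N_R(A_1\cap A_2)$. Let $(x,t)\in N_r(\wcone^{A_1})\cap N_r(\wcone^{A_2})$, all neighbourhoods being measured in $\wcone M$ (equivalently in $\wcone^A$ with the restricted metric). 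Choosing witnesses $(x_i,t_i)\in M\times A_i$ with $\delta^\Gamma((x,t),(x_i,t_i))\le r$ and applying the $1$-Lipschitz map $\pi$ gives $\abs{t-t_i}\le r$, whence $t\in N_r(A_1)\cap N_r(A_2)\subseteq N_R(A_1\cap A_2)$. Pick $t_0\in A_1\cap A_2$ with $\abs{t-t_0}\le R$; then $(x,t_0)\in\wcone^{A_1\cap A_2}$ and, by the isometric section, $\delta^\Gamma((x,t),(x,t_0))=\abs{t-t_0}\le R$. Hence $(x,t)\in N_R(\wcone^{A_1\cap A_2})$, and since $R$ depends only on $r$ this shows $\wcone^A=\wcone^{A_1}\cup\wcone^{A_2}$ is excisive.

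There is essentially no serious obstacle here; the only real ingredient is the $1$-Lipschitz property of the cone-coordinate projection, which is a basic and well-documented feature of warped cones that the paper has already announced it will not reprove. The rest is bookkeeping, the one point requiring a modicum of care being that the auxiliary point $(x,t_0)$ is joined to $(x,t)$ by a path lying in the ambient cone $\cone M$ (so that the bound $\delta^\Gamma((x,t),(x,t_0))\le\abs{t-t_0}$ is legitimate) rather than inside $M\times A$, which need not be convex in the $t$-direction.
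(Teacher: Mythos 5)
Your argument is correct, and it is exactly the content behind the paper's one-line proof (``follows directly from the definition of the warped metric''), made explicit: the cone-coordinate projection $\wcone M\to\rpos$ is $1$-Lipschitz because warping shortcuts do not change $t$, while vertical segments realize $\abs{t-t_0}$, so excisiveness of $A=A_1\cup A_2$ transfers verbatim to $\wcone^A=\wcone^{A_1}\cup\wcone^{A_2}$. No further comment is needed.
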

\begin{proof}
  Follows directly from the definition of the warped metric.
\end{proof}

We are going to carry out a Mayer--Vietoris computation on the warped cone by splitting it along the level sets $\expnat$. Let
\begin{align*}
      I_1 &:= \bigsqcup_{n\in\naturals} [2^{2n}, 2^{2n+1}] \\
      I_2 &:= \bigsqcup_{n\in\naturals} [2^{2n+1},2^{2n+2}]
\end{align*}
Note that $I_1\cup I_2=\rpos$ and $I_1\cap I_2= 2^{\naturals+1}$. Correspondingly,
\begin{equation*}
  \wcone^{I_1}\cup \wcone^{I_2} = \wcone ;
  \qquad
  \wcone^{I_1}\cap \wcone^{I_2} = \wcone^{2^{\naturals+1}}.
\end{equation*}

Let $\text{od}\coloneqq 2\mathbb N +1$ and $\text{ev}\coloneqq 2\mathbb N +2$.
Then $\wcone^\expodd$ and $\wcone^\expeven$ embed in both $\wcone^{I_1}$ and
$\wcone^{I_2}$. In turn, both $\wcone^{I_1}$ and $\wcone^{I_2}$ can be collapsed
onto either of them. We name these mappings as follows:
\[
\begin{tikzcd}
  \wcone^\expodd 
  \ar[-,double line with arrow={<-,->}, label={asd}]{r}[yshift=2pt]{\topp_1}[swap,yshift=-2pt]{\up_1}
  & \wcone^{I_1} 
  & \wcone^\expodd 
  \ar[-,double line with arrow={<-,->}, label={asd}]{r}[yshift=2pt]{\bott_2}[swap,yshift=-2pt]{\down_2}
  & \wcone^{I_2} \\
  \wcone^\expeven 
  \ar[-,double line with arrow={<-,->}, label={asd}]{r}[yshift=2pt]{\bott_1}[swap,yshift=-2pt]{\down_1}
  & \wcone^{I_1} 
  & \wcone^\expeven 
  \ar[-,double line with arrow={<-,->}, label={asd}]{r}[yshift=2pt]{\topp_2}[swap,yshift=-2pt]{\up_2}
  & \wcone^{I_2}.
\end{tikzcd}
\]
Namely, `$\topp$' and `$\bott$' represent the inclusion as top or bottom
extremity, respectively, of the intervals, whereas `$\up$' and `$\down$' map a point $(x,t)$
with $t\in [2^k,2^{k+1}]$ to $(x,2^{k+1})$ and $(x,2^k)$, respectively, see \cref{fig}. There is a small
caveat in that `$\down$' does not map the tip of $\wcone^{I_1}$ into
$\wcone^\expeven$. We
fix this by letting $(x,t)\mapsto (x,4)$ for every $t\in [1,2]$.

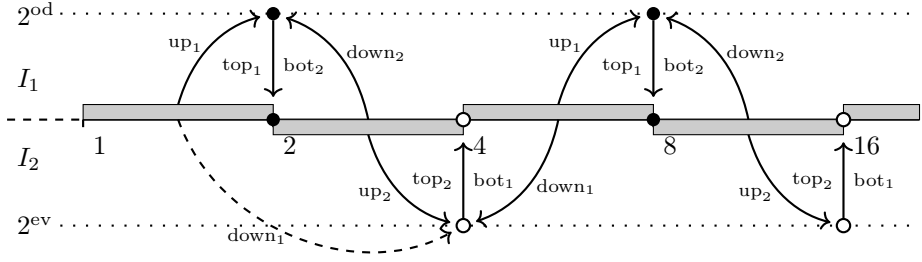
\begin{figure}
  \centering
  \begin{tikzpicture}[scale=1, thick]
      \def\hgt{4em}
      \def\shgt{1.5em}
      \draw[dashed](-1,0)--(0,0);
      \draw (-1,\shgt)node[right]{$I_1$}
            (-1,-\shgt)node[right]{$I_2$}
            (-1,\hgt)node[right]{$\expodd$}
            (-1,-\hgt)node[right]{$\expeven$};
      \draw[] (0,0) -- (11,0);
      \draw[loosely dotted](-0.3,\hgt) -- (11,\hgt);
      \draw[loosely dotted](-0.3,-\hgt) -- (11,-\hgt);
      \foreach \x/\pos in {1/0, 2/2.5, 4/5, 8/7.5, 16/10} {
          \draw (\pos,0) -- ++(0,-0.1) node[below right] {\x};
      }
      
      \draw[thin,fill=black!20] (0,0) rectangle (2.5,0.2);  
      \draw[thin,fill=black!20] (5,0) rectangle (7.5,0.2);  
      \draw[thin,fill=black!20] (10,0) rectangle (11,0.2);  
      
      \draw[thin,fill=black!20] (2.5,-0) rectangle (5,-0.2);  
      \draw[thin,fill=black!20] (7.5,-0) rectangle (10,-0.2); 
      
      \foreach \x in {2.5,7.5} {
        \draw[->] (\x,\hgt-2.5pt) -- (\x,0.3)
            node[pos=0.6,left]{{\scriptsize$\topp_1$}}
            node[pos=0.6,right]{{\scriptsize$\bott_2$}};
        \fill (\x,0) circle[radius=2.5pt];
        \fill (\x,\hgt) circle[radius=2.5pt];
      }
      
      \foreach \x in {5,10} {
        \draw[->] (\x,-\hgt+2.5pt) -- (\x,-0.3)
          node[pos=0.5,left]{{\scriptsize$\topp_2$}}
          node[pos=0.5,right]{{\scriptsize$\bott_1$}};
        \draw[fill=white] (\x,0) circle[radius=2.5pt];
        \draw[fill=white] (\x,-\hgt) circle[radius=2.5pt];
      }
      
      \draw[->, shorten >= 5pt,bend left]  (1.25,0.2) to node[left]{{\scriptsize $\up_1$}}  (2.5,\hgt);
      \draw[->, dashed, shorten >= 5pt,bend right=50]  (1.25,0) to node[left]{{\scriptsize $\down_1$}}  (5,-\hgt);

      \draw[->, shorten >= 5pt,bend left]  (6.25,0.2) to node[left]{{\scriptsize $\up_1$}} (7.5,\hgt);
      \draw[->, shorten >= 5pt,bend left]  (6.25,0) to node[right]{{\scriptsize $\down_1$}} (5,-\hgt);

      \draw[->, shorten >= 5pt,bend right] (3.75,-0.2) to node[left]{{\scriptsize $\up_2$}}  (5,-\hgt) ;
      \draw[->, shorten >= 5pt,bend right] (3.75,-0) to node[right]{{\scriptsize $\down_2$}}  (2.5,\hgt) ;

      \draw[->, shorten >= 5pt,bend right] (8.75,-0.2) to node[left]{{\scriptsize $\up_2$}}  (10,-\hgt);
      \draw[->, shorten >= 5pt,bend right] (8.75,-0) to node[right]{{\scriptsize $\down_2$}}  (7.5,\hgt);
  \end{tikzpicture}
  \caption{Mappings among intervals and their extremities. All the solid arrows represent 1- or 2-Lipschitz maps, while the dashed one is 4-Lipschitz.}\label{fig}
\end{figure}

\begin{lemma}\label{lem:coarse homotopy equivalence} The above pairs of maps are
  coarse homotopy inverses to one another. In particular, $\topp_\ast$ and
  $\bott_\ast$ are isomorphisms with inverses $\up_\ast$ and $\down_\ast$
  respectively.
\end{lemma}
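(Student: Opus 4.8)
All four pairs are interchanged by the obvious symmetries of the picture (swapping $I_1\leftrightarrow I_2$, ``top''\,$\leftrightarrow$\,``bottom'', and odd\,$\leftrightarrow$\,even exponents), so the plan is to treat one of them, say $(\topp_1,\up_1)$, in detail. First I would record that one of the two composites is the identity on the nose: $\up_1\circ\topp_1=\id_{\wcone^\expodd}$, because $\topp_1$ places the level set $M\times\{2^{2n+1}\}$ at the height $2^{2n+1}$, which is the top of $[2^{2n},2^{2n+1}]$, and $\up_1$ then sends it back to itself; likewise $\down_1\circ\bott_1$, $\down_2\circ\bott_2$ and $\up_2\circ\topp_2$ are identities. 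It therefore remains to coarsely homotope the other composite $\topp_1\circ\up_1\colon\wcone^{I_1}\to\wcone^{I_1}$ — the radial retraction $(x,t)\mapsto(x,2^{2n+1})$ for $t\in[2^{2n},2^{2n+1}]$ — to the identity. The natural candidate is the straight-line homotopy in the radial coordinate,
\[
  h\colon\wcone^{I_1}\times[0,1]\to\wcone^{I_1},\qquad h((x,t),s):=(x,\,(1-s)t+s\cdot 2^{2n+1})\ \text{ for }t\in[2^{2n},2^{2n+1}];
\]
it is well defined since the interpolation stays inside $[2^{2n},2^{2n+1}]\subseteq I_1$, it is continuous (the intervals comprising $I_1$ are separated, so it suffices to be continuous on each piece), and $h_0=\id$, $h_1=\topp_1\circ\up_1$. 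What remains is to check that $h$ is a coarse homotopy in the sense recalled above, i.e.\ that $h$ is proper and that its sections $h_s$ are equicontrolled.

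The one genuinely non-formal ingredient — and where I expect essentially all of the (mild) work to sit — is the elementary observation that for every $c\ge 1$ the radial rescaling $p_c\colon\wcone M\to\wcone M$, $p_c(x,t):=(x,ct)$, is $c$-Lipschitz for $\delta^\Gamma$: in the description of $\delta^\Gamma$ as an infimum over chains of cone-segments and unit shortcuts, $p_c$ multiplies the length of each cone-segment by $c$ (the cone metric $t^2\varrho+dt^2$ pulls back under $p_c$ to $c^2(t^2\varrho+dt^2)$) and carries each unit shortcut $(y,u)\mapsto(s\cdot y,u)$ to the unit shortcut $(y,cu)\mapsto(s\cdot y,cu)$, so the cost of any chain grows by at most the factor $c$. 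Granting this, properness of $h$ is immediate, since the radial coordinate of $h_s(x,t)$ lies in $[t,2t]$ and hence $h^{-1}(M\times([1,T]\cap I_1))\subseteq M\times([1,T]\cap I_1)\times[0,1]$. For equicontrol, write $h_s(x,t)=(x,\mu_s(t))$ with $\mu_s(t)\in[t,2t]$; from $\delta^\Gamma((x,t),(x',t'))\le r$ one gets $\delta^\Gamma((x,t),(x',t))\le 2r$ (using $|t-t'|\le\delta^\Gamma((x,t),(x',t'))$, itself clear from the chain description since the radial coordinate varies by at most the cone-length), then $\delta^\Gamma((x,\mu_s(t)),(x',\mu_s(t)))=\delta^\Gamma(p_{\mu_s(t)/t}(x,t),p_{\mu_s(t)/t}(x',t))\le 4r$, and finally one adds the vertical error $|\mu_s(t)-\mu_s(t')|$, which is $\le|t-t'|\le r$ when $t,t'$ share an interval of $I_1$ and is otherwise bounded in terms of $r$ because the gaps of $I_1$ grow (so $\delta^\Gamma\le r$ forces $t,t'\le 2r$). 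This produces a bound $R=R(r)$ independent of $s$; hence $\topp_1\circ\up_1\simeq\id$, and $\topp_{1\ast},\up_{1\ast}$ are mutually inverse isomorphisms.

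The pairs $(\bott_2,\down_2)$ and $(\topp_2,\up_2)$ are handled verbatim (push-down, resp.\ push-up, within the intervals of $I_2$), and $(\bott_1,\down_1)$ the same way on the intervals $[2^{2n},2^{2n+1}]$ with $n\ge 1$. The only wrinkle is the lowest interval $[1,2]$, on which $\down_1$ was redefined to land in $M\times\{4\}$: there I would instead run the homotopy straight down to $M\times\{1\}$ (which stays inside $\wcone^{I_1}$), so that $h_1$ and $\bott_1\circ\down_1$ agree outside the bounded set $M\times[1,2]$ and are uniformly close on it, since $\delta^\Gamma((x,1),(x,4))\le 3$; hence they induce the same map on $HX_\ast$, which is all that is needed (alternatively, one uses the non-continuous homotopies permitted by definition $(\star\star)$). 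Everything except the equicontrol estimate above is soft, and I expect that estimate to be the only real obstacle.
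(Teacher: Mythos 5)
Your proposal is correct and follows essentially the same route as the paper: one composite in each pair is the identity on the nose, and the other is coarsely homotopic to the identity via the straight-line radial homotopy (the paper writes exactly this homotopy, in both senses $(\star)$ and $(\star\star)$, and dismisses the remaining checks and the tip caveat as routine). Your additional verifications — properness, equicontrol via the $c$-Lipschitzness of the radial rescaling $(x,t)\mapsto(x,ct)$, and the closeness argument for $\down_1$ on the lowest interval — are sound and simply make explicit what the paper leaves implicit.
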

\begin{proof}
  Of course, $\up_1\circ \topp_1$ is the identity and $\topp_1\circ \up_1$ is homotopic to the identity by linearly varying the height.
  Explicitly, defining $h\colon \wcone^{I_1}\times[0,1]\to\wcone^{I_1}$ by $h((x,t),s)\coloneqq (x, st + (1-s)2^{2n+1})$ for $2^{2n}\leq t\leq 2^{2n+1}$ yields a coarse homotopy in the sense $(\star)$.

  Defining $H$ on $X\times 2^{[2n,2n+1]}\times \mathbb R$ by
  \[
    H((x,t),s)\coloneqq\left\{ 
      \begin{array}{ll}
        (x, (2^{-2n}s)t + (1-2^{-2n}s)2^{2n+1}) & s\in [0,2^{2n}]\\
        (x, t) & s > 2^{2n}
      \end{array}
    \right.
  \] 
  yields a coarse homotopy in the sense $(\star\star)$. The other cases are analogous.
\end{proof}

Note that the ``shift upwards of the level sets'' map gives a coarse equivalence
\begin{equation*}
  \begin{tikzcd}[column sep = 0pt,row sep = 0pt]
    \shift\colon &[-1em] \wcone^{\expnat}\ar{r} &[2em] \wcone^\expnat;
    \\ 
    & (x,t)\ar[|->]{r} &(x,2t).
  \end{tikzcd}  
\end{equation*}
Writing $2^{\mathbb N+1}=\expodd\sqcup\expeven$ (and ignoring bounded issues at
the tip of the cone which are irrelevant in the coarse category), we observe that $\shift$ swaps $\wcone^\expodd$ with
$\wcone^\expeven$ and that we have $\shift = \up\circ\bott$. In coarse homology,
let
\begin{align*}
  s_{\rm od}\coloneqq (\shift|_{\wcone^\expodd})_\ast\colon
  & HX_\ast(\wcone^\expodd)\to HX_\ast(\wcone^\expeven),
  \\
  s_{\rm ev}\coloneqq (\shift|_{\wcone^\expeven})_\ast\colon
  & HX_\ast(\wcone^\expeven)\to HX_\ast(\wcone^\expodd).
\end{align*}
The following is then immediate.

\begin{lemma}\label{lem:shift in homology} We have:
  \begin{align*}
    s_{\rm od}=& (\topp_2)_\ast^{-1}\circ(\bott_2)_\ast\\
    s_{\rm ev} =& (\topp_1)_\ast^{-1}\circ(\bott_1)_\ast.
  \end{align*}
\end{lemma}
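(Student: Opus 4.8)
The plan is to realize each restricted shift map as a composite of the form ``include as one endpoint, then collapse onto the opposite endpoint'', and then feed this into \cref{lem:coarse homotopy equivalence}.

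First I would verify the point-set identity $\shift|_{\wcone^\expodd}=\up_2\circ\bott_2$. A point of $\wcone^\expodd$ lies on a level set $M\times\{2^{2n+1}\}$, and $2^{2n+1}$ is precisely the lower endpoint of the interval $[2^{2n+1},2^{2n+2}]$ of $I_2$. Hence $\bott_2$ carries $(x,2^{2n+1})$ to that same point now viewed inside $\wcone^{I_2}$, and $\up_2$, which collapses that interval onto its upper endpoint, sends it further to $(x,2^{2n+2})$; since $2\cdot 2^{2n+1}=2^{2n+2}$ this is exactly $\shift(x,2^{2n+1})$. The even case is symmetric, with $I_1$ in place of $I_2$: here $2^{2n}$ is the lower endpoint of $[2^{2n},2^{2n+1}]\subseteq I_1$, so $\up_1\circ\bott_1$ sends $(x,2^{2n})$ to $(x,2^{2n+1})=\shift(x,2^{2n})$, giving $\shift|_{\wcone^\expeven}=\up_1\circ\bott_1$.

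Next I would apply $HX_\ast$ and use functoriality, so that these identities become $s_{\rm od}=(\up_2)_\ast\circ(\bott_2)_\ast$ and $s_{\rm ev}=(\up_1)_\ast\circ(\bott_1)_\ast$. Finally \cref{lem:coarse homotopy equivalence} supplies $(\up_1)_\ast=(\topp_1)_\ast^{-1}$ and $(\up_2)_\ast=(\topp_2)_\ast^{-1}$, and substituting these in yields the two displayed formulas.

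I do not expect a genuine obstacle---the statement is essentially a bookkeeping check. The one point deserving a word of care is the bounded adjustment made to the collapse maps near the tip of the cone, but that adjustment concerns only $\down$, which does not appear in this lemma; and in any case two controlled maps that agree off a bounded set are coarsely close and hence induce the same homomorphism on $HX_\ast$.
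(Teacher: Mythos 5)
Your argument is correct and is essentially the paper's own proof: both identify $\shift|_{\wcone^\expodd}=\up_2\circ\bott_2$ and $\shift|_{\wcone^\expeven}=\up_1\circ\bott_1$ on the level of spaces, apply functoriality, and then invoke \cref{lem:coarse homotopy equivalence} to replace $(\up_i)_\ast$ by $(\topp_i)_\ast^{-1}$. Your closing remark that the bounded modification near the tip only affects $\down$ (and is irrelevant coarsely) is also in line with the paper's conventions, so there is nothing to add.
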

\begin{proof}
 As noted above, $s_{\rm od}\coloneqq (\shift|_{\wcone^\expodd})_\ast = (\up_2\circ\bott_2)_\ast$. We see by \cref{lem:coarse homotopy equivalence} that $(\up_2)_\ast = (\topp_2)_\ast^{-1}$, from which the claim follows. The same applies to $s_{\rm ev}$ as well.
\end{proof}

Let now $\iotaodd\colon\wcone^\expodd\to\wcone^\expnat$ and
$\iotaeven\colon\wcone^\expeven\to\wcone^\expnat$ denote the inclusions.
A first application of Mayer--Vietoris gives the following.

\begin{lemma}\label{lem:M-V on naturals}
In the situation of \cref{thm:LES}, coarse Mayer--Vietoris yields a natural
short exact sequence 
  \[
    \{0\}\to
    HX_\ast(\pt)\xrightarrow{(\iota_\ast,-\iota_\ast)}
    HX_\ast\bigparen{\wcone^\expodd}\oplus HX_\ast\bigparen{\wcone^\expeven}
    \xrightarrow{\hspace{-.5ex}(\iotaodd)_\ast+(\iotaeven)_\ast\hspace{-.5ex}}HX_\ast\bigparen{\wcone^\expnat}
    \to\{0\}.
  \]
\end{lemma}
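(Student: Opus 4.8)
The strategy is to apply the coarse Mayer--Vietoris sequence to the excisive decomposition $\wcone^{\expnat} = \wcone^{\expodd}\cup\wcone^{\expeven}$, whose coarse intersection is $\wcone^{\expodd}\cap\wcone^{\expeven}$. First I would observe that $\expodd$ and $\expeven$ are each coarse disjoint unions of points (the gaps $d(2^{2n+1},2^{2n+3})\to\infty$, and similarly for the even exponents), so they are disjoint as subsets of $\expnat$, yet within $\expnat$ each is coarsely dense in nothing but still their union is all of $\expnat$; crucially, as subsets of $\expnat$, $\expodd$ and $\expeven$ are disjoint but their coarse intersection inside $\wcone^{\expnat}$ is nonempty: any point of $\wcone^{\expodd}$ is at bounded distance (namely $\le 1$ via the shift-by-one-in-the-exponent shortcut, since consecutive powers $2^k, 2^{k+1}$ are at warped distance governed by the cone part, which is not bounded...). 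I need to pin this down: the coarse intersection of $\wcone^{\expodd}$ and $\wcone^{\expeven}$ inside $\wcone^{\expnat}$ is a \emph{single point} up to coarse equivalence, because the bounded neighbourhoods $N_r(\wcone^{\expodd})\cap N_r(\wcone^{\expeven})$ are, for each fixed $r$, nonempty only in a bounded region near the tip of the cone (for large $t$, the level sets $2^{2n+1}$ and the nearest even-exponent level set are at warped cone distance $\sim 2^{2n+1}$, which exceeds any fixed $r$). Hence the coarse intersection is (coarsely equivalent to) $\pt$. This is the step I expect to require the most care, and it is where \cref{lem:excisive is excisive} and the explicit form of the warped metric get used.

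Granting that, coarse Mayer--Vietoris produces the long exact sequence
\[
\cdots\to HX_{\ast+1}(\wcone^{\expnat})\xrightarrow{\partial} HX_\ast(\pt)\xrightarrow{(\iota_\ast,-\iota_\ast)} HX_\ast(\wcone^{\expodd})\oplus HX_\ast(\wcone^{\expeven})\xrightarrow{(\iotaodd)_\ast + (\iotaeven)_\ast} HX_\ast(\wcone^{\expnat})\to\cdots
\]
where the two maps out of $HX_\ast(\pt)$ are $\pm$ the map induced by the single inclusion $\pt\hookrightarrow\wcone^{\expnat}$ (both $\wcone^{\expodd}$ and $\wcone^{\expeven}$ receive the basepoint, and the Mayer--Vietoris map into the direct sum records it with opposite signs). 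To turn this into the asserted short exact sequence I would argue exactness at the two ends. On the right: surjectivity of $(\iotaodd)_\ast + (\iotaeven)_\ast$ onto $HX_\ast(\wcone^{\expnat})$ follows because the next term of the long exact sequence is $HX_{\ast-1}(\pt)$, and the connecting map $HX_\ast(\wcone^{\expnat})\to HX_{\ast-1}(\pt)$ is zero — indeed it factors through the inclusion-induced map whose role is controlled by the hypothesis of \cref{thm:LES}. On the left: injectivity of $(\iota_\ast,-\iota_\ast)\colon HX_\ast(\pt)\to HX_\ast(\wcone^{\expodd})\oplus HX_\ast(\wcone^{\expeven})$ is where the standing hypothesis of \cref{thm:LES} enters directly: the embedding $\{1\}\hookrightarrow\expnat$ induces an injection $HX_\ast(\pt)\hookrightarrow HX_\ast(\expnat)$, hence (pulling back through the projection $\wcone^{\expodd}\to M$, or rather noting $\wcone^{\expodd}$ is coarsely $M\times\expodd$ and $\expodd\subset\expnat$ is a coarsely dense... ) the composite $HX_\ast(\pt)\to HX_\ast(\wcone^{\expodd})$ is split injective. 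So the kernel of $(\iota_\ast,-\iota_\ast)$ is trivial, and the connecting map $HX_{\ast+1}(\wcone^{\expnat})\to HX_\ast(\pt)$ preceding it is zero.

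Finally I would assemble these facts: the long exact sequence breaks at every instance of $HX_\ast(\pt)$ into the claimed four-term exact sequence
\[
\{0\}\to HX_\ast(\pt)\xrightarrow{(\iota_\ast,-\iota_\ast)} HX_\ast(\wcone^{\expodd})\oplus HX_\ast(\wcone^{\expeven})\xrightarrow{(\iotaodd)_\ast+(\iotaeven)_\ast} HX_\ast(\wcone^{\expnat})\to\{0\},
\]
and naturality is inherited from naturality of the Mayer--Vietoris sequence together with naturality of the injection in the hypothesis. The only genuinely geometric input is the identification of the coarse intersection with a point; everything else is diagram-chasing in the Mayer--Vietoris sequence plus the injectivity hypothesis. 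I would present the coarse-intersection computation as a short lemma (or fold it into \cref{lem:excisive is excisive}), then deduce the short exact sequence in a couple of lines.
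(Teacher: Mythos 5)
Your overall skeleton is the same as the paper's: Mayer--Vietoris applied to the odd/even-exponent decomposition, with intersection a point, injectivity of $(\iota_\ast,-\iota_\ast)$ fed by the hypothesis on $\{1\}\hookrightarrow\expnat$, and then exactness forcing the boundary maps to vanish so the long exact sequence breaks into short ones. However, there is a genuine gap exactly at the step you flag as delicate. You apply the Mayer--Vietoris axiom to the pair $\bigparen{\wcone^\expodd,\wcone^\expeven}$, whose \emph{literal} intersection is empty (and whose union even misses the level $2^0=1$, though that is only cosmetic since it is coarsely dense). The axiom listed in \cref{ssec:coarse homology setup} requires an excisive decomposition $X=A\cup B$ and places $HX_\ast(A\cap B)$ in the third slot. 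With $A\cap B=\emptyset$ the pair is \emph{not} excisive: for $r\geq 2$ the $r$-neighbourhoods of the two pieces meet (e.g.\ at the level $t=2$, since consecutive levels $2^k,2^{k+1}$ are at warped distance at most $2^k$), while every neighbourhood of the empty set is empty; and a blind application of the axiom would put $HX_\ast(\emptyset)$, not $HX_\ast(\pt)$, in the sequence. Your computation that the \emph{coarse} intersection is a bounded set near the tip is correct, but to invoke the axiom one must first modify the decomposition so that the honest intersection realizes this coarse intersection. That is precisely the paper's (small but essential) trick: write $\expnat=(\expodd\cup\{1\})\cup(\expeven\cup\{1\})$, note that adjoining the bounded piece $\wcone^{\{1\}}$ changes neither piece up to coarse equivalence, check excisiveness via \cref{lem:excisive is excisive}, and only then run Mayer--Vietoris with intersection $\wcone^{\{1\}}$, which is bounded and hence contributes $HX_\ast(\pt)$.

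The second weak point is your argument for injectivity of $(\iota_\ast,-\iota_\ast)$. Neither route you sketch works as written: a projection $\wcone^\expodd\to M$ is not proper, so it induces nothing in a coarse homology theory, and $\expodd$ is \emph{not} coarsely dense in $\expnat$ (its gaps to the even-exponent levels grow); also, the hypothesis of \cref{thm:LES} gives no retraction, so ``split injective'' is an overclaim (and is not needed). The correct way to use the hypothesis is the paper's: the level projection $\pi\colon\wcone^{\expodd\cup\{1\}}\to\expnat$, $(x,2^k)\mapsto 2^k$, is proper and controlled, and its composite with $\iota$ is (coarsely) the inclusion $\{1\}\hookrightarrow\expnat$, which is injective on $HX_\ast$ by hypothesis; hence the first component $\iota_\ast$, and so $(\iota_\ast,-\iota_\ast)$, is injective. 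With these two repairs your argument coincides with the paper's proof; the concluding diagram chase (injectivity in every degree kills all boundary maps, yielding the stated short exact sequences, with naturality inherited from that of Mayer--Vietoris) is fine as you wrote it.
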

\begin{proof}
  The inclusions $\wcone^\expodd\hookrightarrow\wcone^{\expodd\cup\{1\}}$ and
  $\wcone^\expeven\hookrightarrow\wcone^{\expeven\cup\{1\}}$ are coarse
  equivalences, hence induce natural isomorphisms in coarse homology. Note that
  $\expnat= (\expodd\cup\{1\})\cup(\expeven\cup\{1\})$ is an excisive pair.
  By \cref{lem:excisive is excisive}, we can apply coarse Mayer--Vietoris to obtain a long exact sequence 
  \[
  \begin{tikzcd}
    \cdots\ar{r} &
    HX_\ast(\wcone^{\{1\}})\ar{r}{(\iota_\ast,-\iota_\ast)} &[2ex]
    HX_\ast\bigparen{\wcone^{\expodd\cup\{1\}}}\oplus HX_\ast\bigparen{\wcone^{\expeven\cup\{1\}}}
    \ar{d}{(\iotaodd)_\ast+(\iotaeven)_\ast}
    & \\
    && HX_\ast(\wcone^{\expnat})\ar{r}{\partial}&\cdots
  \end{tikzcd}
  \]

  Observe that
  $HX_\ast(\wcone^{\{1\}})=HX_\ast(\pt)$, because $\wcone^{\{1\}}$ is
  bounded. Moreover, the projection $\pi\colon \wcone^{\expodd\cup\{1\}}\to\expnat$
  sending $(x,2^{k})\mapsto 2^{k}$ is proper and controlled, hence induces a map in $HX_\ast$.
  The composition of $\iota$ with $\pi$ is the
  inclusion $\{1\}\hookrightarrow\expnat$ which is injective in $HX_*$ by
  hypothesis. Hence the arrow $(\iota_\ast,-\iota_\ast)$ is injective and by exactness
  all the boundary maps in the long exact sequence vanish, which proves the lemma.
\end{proof}

Note that the diagram
\[
\begin{tikzcd}[ampersand replacement=\&, column sep=5em, row sep = 2em]
  HX_\ast\bigparen{\wcone^{\expodd}}\oplus HX_\ast\bigparen{\wcone^{\expeven}}
  \ar[->>]{d}[swap]{(\iotaodd)_\ast+(\iotaeven)_\ast}
  \ar{r}{
    \begin{psmallmatrix}
      0 & s_{\rm ev} \\
      s_{\rm od} & 0
    \end{psmallmatrix}
  }
  \&
  HX_\ast\bigparen{\wcone^{\expodd}}\oplus HX_\ast\bigparen{\wcone^{\expeven}}
  \ar[->>]{d}{(\iotaodd)_\ast+(\iotaeven)_\ast}.
  \\
  HX_\ast(\wcone^{\expnat})\ar{r}{\shift_\ast}
  \&
  HX_\ast(\wcone^{\expnat})
\end{tikzcd}
\]
commutes. We now have all that it takes to prove \cref{thm:LES}.

\begin{proof}[Proof of \cref{thm:LES}]
  The decomposition $\rpos = I_1\cup I_2$ is excisive, hence we obtain a
  Mayer--Vietoris exact sequence
 \begin{equation} \label{eq:M-V on I1, I2}
    \cdots \to
    HX_\ast(\wcone^\expnat)
    \xrightarrow{({\iota_1}_*,{\iota_2}_*)}
    HX_\ast\bigparen{\wcone^{I_1}}\oplus HX_\ast\bigparen{\wcone^{I_2}}
    \xrightarrow{{j_1}_* -{j_2}_*}
    HX_\ast(\wcone)
    \xrightarrow{\partial}\cdots.
  \end{equation}
  Here we are using the opposite sign convention from the Mayer--Vietoris
  sequence of \cref{lem:M-V on naturals}, as this results in a tidier
  computation.
  We use the natural isomorphism from \cref{lem:coarse homotopy equivalence}
  and the exact sequence from \cref{lem:M-V on naturals} to obtain a
  commutative diagram as follows:
  \begin{equation}\label{cd: expnat to sum via top top}
  \begin{tikzcd}
    HX_\ast(\wcone^\expnat)\ar{r}{({\iota_1}_*,{\iota_2}_*)}\ar[dashed]{dr}{D}
    &
    HX_\ast\bigparen{\wcone^{I_1}}\oplus HX_\ast\bigparen{\wcone^{I_2}} 
    \\ 
    HX_\ast\bigparen{\wcone^{\expodd}}\oplus HX_\ast\bigparen{\wcone^{\expeven}}
    \ar[->>]{u}{(\iotaodd)_\ast+(\iotaeven)_\ast}
    \ar[dashed]{r}{A}
    &
    HX_\ast\bigparen{\wcone^{\expodd}}\oplus HX_\ast\bigparen{\wcone^{\expeven}}.
    \ar{u}[swap]{(\topp_1)_\ast \oplus(\topp_2)_\ast}{\cong}
  \end{tikzcd}
  \end{equation}
  Note that
  \begin{align*}
    \iota_1\circ\iotaodd &= \topp_1 
    &
    \iota_2\circ\iotaodd &= \bott_2
    \\
    \iota_1\circ\iotaeven &= \bott_1
    &
    \iota_2\circ\iotaeven &= \topp_2.
  \end{align*}
  Therefore \cref{lem:shift in homology} shows that the arrow labelled with $A$ is given by
  \[
A=  \left(
    \begin{array}{cc}
      1 & s_{\rm ev} \\
      s_{\rm od} & 1
    \end{array}
  \right) .
  \]

  We now identify the right hand side of the diagram \eqref{cd: expnat to sum via top top}, apply the exact sequence of \cref{lem:M-V on naturals}, and then combine it with the long
  exact sequence \eqref{eq:M-V on I1, I2} to obtain the commutative diagram
  \begin{equation}\label{cd: exactness at first expnat}
  \begin{tikzcd}[row sep=3em]
    HX_{\ast-1}(\wcone)\ar{r}{\partial}
    &[-3em]
    HX_\ast(\wcone^\expnat)\ar{r}{\id+\shift_\ast}\ar{dr}{D}
    &[-.5em]
    HX_\ast(\wcone^\expnat) 
    \\
    &
    HX_\ast\bigparen{\wcone^{\expodd}}\oplus HX_\ast\bigparen{\wcone^{\expeven}}
    \ar[->>]{u}{(\iotaodd)_\ast+(\iotaeven)_\ast}
    \ar{r}{A}   
    &
    HX_\ast\bigparen{\wcone^{\expodd}}\oplus HX_\ast\bigparen{\wcone^{\expeven}}.
    \ar{u}[swap]{(\iotaodd)_\ast+(\iotaeven)_\ast}
    \\
    &
    HX_\ast(\pt)\ar{u}{(\iota_\ast,-\iota_\ast)}
    &
    HX_\ast(\pt)\ar{u}[swap]{(\iota_\ast,-\iota_\ast)}
  \end{tikzcd}
  \end{equation}
  Note that the top line is exact at $HX_\ast(\wcone^\expnat)$ because by
  a diagram chase $\ker(D)=\ker(\id+\shift_\ast)$.  
  Combining \eqref{cd: expnat to sum via top top} with the right part of the
  Mayer--Vietoris exact sequence \eqref{eq:M-V on I1, I2}, we also know that
  \begin{equation}\label{eq: exact right of D}
    HX_\ast(\wcone^\expnat)
    \xrightarrow{D}
    HX_\ast\bigparen{\wcone^{\expodd}}\oplus HX_\ast\bigparen{\wcone^{\expeven}}
    \xrightarrow{\paren{j_1\circ \topp_1}_* -\paren{j_2\circ\topp_2}_*}
    HX_\ast(\wcone)
  \end{equation}
  is exact. 
    Combining the above and the short exact sequence of \cref{lem:M-V on naturals} we obtain the following diagram 
    \[\begin{tikzcd}
  & HX_\ast(\wcone^\expnat) \ar[dashed, shift left=1.5ex]{dr} & [6em] \\
    HX_\ast(\wcone^\expnat)
    \ar{r}{D}
    &
    HX_\ast\bigparen{\wcone^{\expodd}}\oplus HX_\ast\bigparen{\wcone^{\expeven}}\ar[->>]{u}[swap]{(\iotaodd)_\ast+(\iotaeven)_\ast}
    \ar{r}{\paren{j_1\circ \topp_1}_* -\paren{j_2\circ\topp_2}_*}
    &[6em]
    HX_\ast(\wcone)
    \\
    &
     HX_\ast(\pt)\ar{u}[swap]{(\iota_\ast,-\iota_\ast)}
    &
  \end{tikzcd},
  \]
  where the vertical and horizontal sequences are exact.
  The image of $HX_\ast(\pt)$ under the composition of
  $(\iota_\ast,-\iota_\ast)$ and the map on the right hand side of \eqref{eq:
  exact right of D} is $2 \cdot \iota_\ast(HX_\ast(\pt))$. However, we have a
  factorization of $\iota$ as $\{\pt\}\to [1,\infty)\to \wcone$. As
  $HX_*([1,\infty))=0$ by flasqueness, we see that $2\iota_*HX_*(\pt)=0$. It follows that
  $\paren{j_1\circ \topp_1}_* -\paren{j_2\circ\topp_2}_*$ factors 
  through the quotient by $(\iota_\ast,-\iota_\ast)(HX_\ast(\pt))$, as indicated by the dashed arrow in the diagram above. We call the quotient map $\jalt$ and observe that the top row exact sequence of
  \eqref{cd: exactness at first expnat} prolongs to the right of
  $(\id+\shift_\ast)$:
  \begin{equation}\label{eq: diagram prolong exactness to the right}
  \begin{tikzcd}[row sep=3em]
    HX_\ast(\wcone^\expnat)\ar{r}{\id+\shift_\ast}\ar{dr}{D}
    &
    HX_\ast(\wcone^\expnat)\ar{r}{\jalt}
    &[0em]
    HX_\ast(\wcone)\ar{r}{\partial}
    & \cdots
    \\
    &
    HX_\ast\bigparen{\wcone^{\expodd}}\oplus HX_\ast\bigparen{\wcone^{\expeven}}
    \ar[->>]{u}[swap]{(\iotaodd)_\ast+(\iotaeven)_\ast}
    \ar{ur}[swap]{\paren{j_1\circ \topp_1}_* -\paren{j_2\circ\topp_2}_*}
    & &
  \end{tikzcd}
  \end{equation}
  This concludes the proof of \cref{thm:LES}.
\end{proof}

\section{Consequences of \cref{thm:LES}}\label{sec: consequences of LES}
It is well known that both the coarse K-homology and the K\=/theory of Roe
algebras satisfy the three axioms listed in \cref{ssec:coarse homology setup}.
For references, we refer to \cite[Section 8]{BunkeEngel2020HomotopyTheory}, in
particular \cite[Theorems 8.79 and 8.88]{BunkeEngel2020HomotopyTheory}, which
shows that the the K-theory of the Roe algebra is a coarse homology theory in
the sense of \cite{BunkeEngel2020HomotopyTheory}. On the other hand,
\cite[Section 7]{BunkeEngel2020HomotopyTheory}, in particular \cite[Proposition
7.46,Definition 7.66]{BunkeEngel2020HomotopyTheory} implies that all
coarsifications of locally finite homology theories and therefore also coarse
K-homology is a coarse homology theory in the sense of
\cite{BunkeEngel2020HomotopyTheory}. By \cite[Corollary
4.28]{BunkeEngel2020HomotopyTheory} a coarse homology theory in the sense of
\cite{BunkeEngel2020HomotopyTheory} satisfies homotopy invariance and
Mayer--Vietoris (the latter is the long exact sequence of homotopy groups
associated to the cocartesian square of spectra of \cite[Corollary 4.28
4.]{BunkeEngel2020HomotopyTheory}).

We need one further property which is not completely automatic: the
compatibility of the coarse assembly map with the boundary map of the
Mayer--Vietoris sequences. This is commonly used in the literature and proven in
\cite[Section 3]{Siegel} or \cite[Theorem 2.10]{SchickZadeh}.

In the following, which is well known to experts, we verify also the last requirement of \cref{thm:LES}:

\begin{lemma}
  The inclusion $\{1\}\hookrightarrow \expnat$ induces injections
  \[
   KX_\ast(\{1\})\hookrightarrow KX_\ast(\expnat)
   \qquad
   K_\ast(\Croe{\{1\})}\hookrightarrow K_\ast(\Croe{\expnat})
  \]
\end{lemma}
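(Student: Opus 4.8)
The statement is the final axiom-checking needed to apply \cref{thm:LES}: both $KX_*$ and $K_*\circ\Croe{-}$ split off the homology of a point from the homology of the ``staircase'' $\expnat$. The cleanest approach is to exhibit a retraction $\expnat\to\{1\}$ \emph{at the level of the coarse homology theory}, i.e.\ a left inverse to the inclusion, and then invoke functoriality. There is no genuinely coarse map $\expnat\to\{1\}$ (a single point), but the point here is that $\{1\}$ and $\expnat$ are \emph{not} coarsely equivalent, yet the inclusion can still be shown injective via a transfer/averaging argument or via explicit computation of the two sides.

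\textbf{First approach: direct computation.} The space $\expnat = \{2^n \mid n\in\naturals\}$ equipped with the Euclidean metric is a coarse disjoint union of the singletons $\{2^n\}$, since $d(2^n, 2^m)\to\infty$ as $|n-m|\to\infty$. For any coarse homology theory satisfying the axioms of \cref{ssec:coarse homology setup} — in particular for $KX_*$ and $K_*(\Croe{-})$ — one has the standard computation of the invariants of a coarse disjoint union $\bigsqcup_n Y_n$ of uniformly bounded spaces: there is a short exact sequence relating $HX_*(\bigsqcup Y_n)$ to $\prod_n HX_*(Y_n)$ and $\bigoplus_n HX_*(Y_n)$ (this is exactly the $\prod/\bigoplus$ phenomenon appearing in the sketch of \cref{thm: old-counterexample}). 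Concretely, for $K$-homology $KX_*(\expnat)\cong \prod_\naturals KX_*(\pt)$ and for the Roe algebra $K_*(\Croe{\expnat})\cong \prod_\naturals K_*(\mathcal K)$, with the inclusion of a single point $\{2^0\}\hookrightarrow\expnat$ corresponding to the inclusion of one factor into the product (up to the $\bigoplus$, which is irrelevant for a single factor). Inclusion of a direct factor into a product is visibly injective, which is the claim. The one routine point to spell out is that the inclusion $\{1\}=\{2^0\}\hookrightarrow\expnat$ indeed hits the $n=0$ coordinate and that the coordinate projection is realized by a controlled proper map — but $\expnat\to\{2^0\}$ collapsing everything is \emph{not} proper, so one instead uses that the inclusion $\{2^0\}\hookrightarrow\expnat$ composed with the projection onto the $n=0$ summand in the $\prod/\bigoplus$ description is the identity.

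\textbf{Second approach: a single splitting map.} Alternatively, and perhaps more in the spirit of the rest of the paper, one can produce a proper controlled map $r\colon \expnat\to [1,\infty)$ — namely the inclusion $\expnat\hookrightarrow[1,\infty)$ itself — together with the inclusion $\{1\}\hookrightarrow [1,\infty)$; since $HX_*([1,\infty))=0$ by flasqueness this alone gives nothing, so this does not directly work and one is pushed back to the disjoint-union computation. A cleaner variant: pick the proper controlled map $\pi\colon \expnat\to\naturals$, $2^n\mapsto n$, where $\naturals$ carries the metric inherited as a subset of $[0,\infty)$; this is a coarse equivalence, and under it $\{2^0\}\mapsto\{0\}$, reducing to the well-known statement that $\{0\}\hookrightarrow\naturals$ is split-injective on any coarse homology theory — which again is the disjoint-union computation, since $\naturals$ with its metric is itself a coarse disjoint union of points only after rescaling; it is \emph{not}, as consecutive integers are at distance $1$. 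So in fact $\naturals$ is the wrong model and one must work with $\expnat$ (or any sequence with gaps tending to infinity) directly.

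\textbf{Main obstacle.} The real content is thus the $\prod/\bigoplus$ description of a coarse homology theory on a coarse disjoint union of uniformly bounded pieces, together with the identification of the map induced by including one piece. For $K_*(\Croe{-})$ this is classical (it underlies \cref{exmp: expanders} and the trace $\tau_{\rm d}$ in \cref{thm: old-counterexample}); for a general $HX_*$ satisfying only the three listed axioms it follows from an infinite coarse Mayer--Vietoris / $\lim^1$ argument, or one simply cites it for the two theories of interest. I expect the write-up to be short: reduce to this known description, note that the inclusion of a bounded subspace $\{1\}$ lands in a single factor of the product, and conclude injectivity. The subtlety to watch is that on $K_*(\Croe{-})$ one must use $\Croe{\{1\}}=\mathcal K$ and $K_0(\mathcal K)=\integers$ rather than naively writing $\Croe{\pt}$; and that the rank (or the generator of $KX_0(\pt)$) really does map to a coordinate that survives in the product, not to something in the ideal $\bigoplus$.
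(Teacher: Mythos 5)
Your overall strategy (compute both sides for the sparse sequence of points $\expnat$ and identify the image of the class of a point) is the same as the paper's, but the computation you assert is not correct, and the error sits exactly at the point you yourself flag as ``the subtlety to watch''. It is not true that $KX_\ast(\expnat)\cong\prod_\naturals K_\ast(\pt)$ or $K_\ast(\Croe{\expnat})\cong\prod_\naturals K_\ast(\mathcal K)$ with the inclusion of $\{1\}$ inducing the inclusion of one direct factor. In the Roe algebra, a rank-one projection supported at $2^n$ is Murray--von Neumann equivalent to one supported at $2^m$ via a partial isometry of finite propagation, so the classes $(1,0,0,\dots)$ and $(0,\dots,0,1,0,\dots)$ coincide in $K_0(\Croe{\expnat})$; the ``rank per level'' coordinates are only defined modulo the subgroup $\ker(\sigma)$ of finitely supported sequences with vanishing sum. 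The paper's proof consists precisely in making this correct: filtering by propagation (resp.\ by Rips complexes $P_{R_n}$ for $KX_\ast$, where the first $n+1$ points span a simplex), the structure maps of the resulting direct limit are the \emph{sum} maps $\varphi_{n,m}$, and the limit is
\begin{equation*}
  \prod_{k\in\naturals} K_\ast(\pt)\Big/\ker(\sigma),
  \qquad
  0\to K_\ast(\pt)\longrightarrow KX_\ast(\expnat)\longrightarrow \frac{\prod_\naturals K_\ast(\pt)}{\bigoplus_\naturals K_\ast(\pt)}\to 0,
\end{equation*}
and likewise for $K_\ast(\Croe{\expnat})$ using continuity of K-theory. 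Injectivity then holds not because ``a factor includes into a product'' but because the class of $\{1\}$ lands in the kernel copy of $K_\ast(\pt)$, i.e.\ its image $(a,0,0,\dots)$ has nonzero sum whenever $a\neq 0$, so it does not lie in $\ker(\sigma)$.

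Concretely, the gap is this: with your product description the argument ``inclusion of a direct factor is visibly injective'' does not apply, and had the correct quotient been by $\bigoplus$ rather than by $\ker(\sigma)$ the class of a single point would in fact map to zero; distinguishing these two possibilities is the entire content of the lemma. Your closing remark that one must check the generator ``survives in the product, not in the ideal $\bigoplus$'' names the issue but does not prove it, and ``one simply cites it'' is not available here since the identification of the induced map is exactly what has to be established. (Your second approach correctly recognizes its own failure: there is no proper controlled retraction onto a point, and $\naturals$ with the induced metric is not a valid model.) To repair the first approach you would need to carry out the colimit-with-sum-maps computation sketched above for both theories --- which is precisely the paper's proof --- or, for the Roe-algebra half only, invoke that $\expnat$ satisfies the coarse Baum--Connes conjecture and deduce it from the $KX_\ast$ statement, as the paper also notes.
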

\begin{proof}
  The coarse K-homology of $X$ can be defined as the direct limit of the
  locally finite K-homology groups of the $R$-Rips complexes $P_R(X)$
  as $R$ tends to infinity. Passing to an appropriate cofinal sequence $R_n$,
  the associated Rips complex consists of the simplex spanned by the
    first $n+1$ points together with the disjoint union of the points $2^k$
    with $k>n$. Since we use locally finite  K-homology, $K_\ast(P_{R_n}(X))\cong \prod_{k\geq n} K_\ast(\pt)$ (alternatively, see \cite{WillettYu2020HigherIndex}*{Theorem 6.4.20}).

  For $n<m$, the inclusion $P_{R_{n}}(X)\hookrightarrow P_{R_{m}}(X)$ induces the homomorphism
  \begin{equation*}
    \begin{tikzcd}[column sep = 0pt,row sep = 0pt]
      \varphi_{n,m}\colon &[-.5em] \prod_{k\geq n} K_\ast(\pt) \ar{r} &[2em] \prod_{k\geq m} K_\ast(\pt);
      \\ 
      & (a^{(n)}_k)_{k\geq n}\ar[|->]{r} & (\sum_{k=n}^{m}a^{(n)}_k,\;a^{(n)}_{m+1},\;a^{(n)}_{m+2},\dots).
    \end{tikzcd}  
  \end{equation*}that adds up the components between $n$ and $m$.

  Let $\sigma\colon \bigoplus_{k\in \naturals}K_\ast(\pt)\to
  K_\ast(\pt)$ and $\sigma_n\colon \bigoplus_{k\ge n} K_\ast(\pt)\to
  K_\ast(\pt)$ be the sum homomorphisms. The inclusions (extending by
  zero) then induce isomorphisms
  \begin{equation*}
    \frac{\prod_{k\geq n} K_\ast(\pt)}{\ker(\sigma_n)}
      \xrightarrow{\ \cong\ }
    \frac{\prod_{k\in \naturals} K_\ast(\pt)}{\ker(\sigma)}. 
  \end{equation*}
  These homomorphisms are compatible with the $\varphi_{n,m}$ and hence induce
  a well defined homomorphism
  \begin{equation*}
      KX_\ast(\expnat) 
    \cong \varinjlim_{n\in\mathbb N}\left(\prod_{k\ge n} K_\ast(\pt)\right)
    \longrightarrow  \frac{\prod_{n\in \mathbb N} K_\ast(\pt)}{\ker(\sigma)}.
  \end{equation*}
  This map is surjective, as already the homomorphism for $n=0$ is
  surjective. It is also injective, because every element of $\ker(\sigma)$ is
  mapped to $0$ by some $\varphi_{0m}$. Using the isomorphism $\frac{\bigoplus_{n\in\naturals}
    K_*(\pt)}{\ker(\sigma)} \xrightarrow{\cong} K_*(\pt)$ and the isomorphism
  theorem,  we get the canonical short exact sequence
  \begin{equation}\label{eq:compute_KX}
    0\to K_\ast(\pt)\longrightarrow KX_*(2^\naturals) \longrightarrow \frac{\prod_{n\in\naturals}
      K_*(\pt)}{\bigoplus_{n\in\naturals}K_*(\pt)} \to 0
  \end{equation}
  The homomorphism induced by inclusion $\{1\}\to \expnat$ factorizes---using
  $KX_\ast(\{1\})\cong K_\ast(\pt)$---through the inclusion of $K_\ast(\pt)$
  in \eqref{eq:compute_KX} and is therefore injective.
  
  The proof for the $K$-theory of the Roe algebra is analogous by using that
  \[
  \Croe{\expnat}\cong \varinjlim_{n\in\mathbb N}\left(\prod_{k\geq n}\Croe{\pt} \right)
  \cong \varinjlim_{n\in\mathbb N}\left(\prod_{k\geq n}\mathcal K \right)
  \]
  and continuity of K\=/theory. Here the description as limit is obtained by observing that the set of locally compact operators of propagation at most $R_n$ is identified with the $C^*$-algebra $\prod_{k\geq n}\Croe{\pt}$, where the first instance of $\Croe{\pt}$ accounts for all the operators supported on $\{2^k\mid k\leq n\}$.
  For $n<m$, the homomorphisms $\varphi_{n,m}$ induced in K-theory by the inclusions $\{\text{prop.}\leq R_n\}\hookrightarrow \{\text{prop.}\leq R_m\}$ are the sum, just as in the K-homology case. The same computation then applies.
  
  Alternatively, the statement in K-theory can be deduced from the statement on coarse K-homology, because $\expnat$ satisfies the coarse Baum--Connes conjecture.
\end{proof}

We can hence apply \cref{thm:LES} to obtain long exact sequences in this setting. This is done in the next two subsections.

\subsection{Vanishing revisited}
As a first application, we note that \cref{thm:LES} can be used to give yet another proof of the fact that the
(integral) Dru\k{t}u--Nowak projection vanishes in $K$-theory.

Recall that we construct the Roe algebra of $\wcone^\naturals$ using the geometric module $L^2(M)\otimes\ell^2(\naturals)$, which contains one orthogonal copy of $L^2(M)$ for each $n\in \naturals$.
Let $p\in \mathcal B(L^2M)$ be a projection such that
$p\otimes\id_{\ell^2\naturals}\in\Croe{\wcone^\naturals}$.\footnote{ We already
observed that every $p\in\Cfp{\Gamma\curvearrowright M}\cap \mathcal K(L^2M)$
satisfies this condition, but a priori this might be true for other projections
as well. }
It is convenient to write $p\otimes\id_{\ell^2\naturals}$ as the
sequence $(p)_{i\in \naturals} = (p,p,\ldots)$.
With this notation, when the integral Dru\k{t}u--Nowak projection $\mathfrak G^\naturals$ exists, it is given by $(m,m,\ldots)$, where $m\in\mathcal B(L^2M)$ is the projection onto the constant functions.

Given $p$ as above and a sequence of integers $(c_i)_{i\in \naturals}$, we wish to consider the K\=/theory class
$[(c_0p,c_1p,\ldots)]\in K_0(\Croe{\wcone^\naturals})$.
To make sense of this, if the $c_i$'s are all positive one may tensor $L^2(M)\otimes\ell^2(\naturals)$ with another infinite dimensional separable Hilbert space $\mathcal H$ and define $c_i(p\otimes\delta_i)$ as $p\otimes\delta_i\otimes q_i$ where $q_i\in \mathcal B(\mathcal H)$ is a projection of rank $c_i$ (i.e.\ we pass to a ``very ample'' geometric module). The K\=/theory class 
\[
  [(c_0p,c_1p,\ldots)] \coloneqq [(p\otimes\delta_0\otimes q_0\;,\ p\otimes\delta_1\otimes q_1\;,\; \ldots\; )]
  \in K_0(\Croe{\wcone^\naturals})
\]
does not depend on the choices made: had we used $q_i'\in\mathcal B(\mathcal H)$, we could pick unitaries $U_i\in \U(\mathcal H)$ conjugating $q_i'$ to $q_i$, and assemble them into a unitary operator $U$ on $L^2(M)\otimes\ell^2(\naturals)\otimes\mathcal H$ that conjugates $(p\otimes\delta_i\otimes q_i')_{i\in \mathbb N}$ to $(p\otimes\delta_i\otimes q_i)_{i\in \mathbb N}$. Since $U$ has propagation zero, this shows that the two projections define the same element in K-theory.
Note that $[(1\,p,1\,p,\ldots)]$ is naturally identified with $[(p,p,\ldots)]$ by embedding $L^2(M)\otimes\ell^2(\naturals)$ into $L^2(M)\otimes\ell^2(\naturals)\otimes\mathcal H$.

A K\=/theory class $[(c_0p,c_1p,\cdots)]$ with non-positive coefficients is obviously defined taking formal differences.
Identifying $K_0(\Croe{\wcone^\naturals})\cong K_0(\Croe{\wcone})$ via the embedding $\wcone^\naturals\subseteq\wcone$ as at the beginning of \cref{sec: vanishing proof}, we can then make sense of the class $[(c_ip)_{i\in\naturals}]\in K_0(\Croe{\wcone})$.

\begin{remark}
  Using this notation, the class $[\Phi(\mathfrak G^\naturals)]$ appearing in \cref{ssec: swindle} is nothing but $[((n+1)m)_{n\in\naturals}]$.
\end{remark}

We can do the same on the exponentially-spaced warped cone $\wcone^\expnat$: given $p$ as above and an integer sequence $(c_{2^j})_{2^j\in\expnat}$ we obtain a class $[(c_{2^j}p)_{2^j\in\expnat}]\in K_0(\Croe{\wcone^\expnat})$.
Quite plainly, we have:
\begin{equation}\label{eq: shift of K0}
  \shift_\ast([(c_{2^0}p,c_{2^1}p,\cdots)]) = [(0,c_{2^0}p,c_{2^1}p,\cdots)]
   = [(c_{2^{j-1}}p)_{2^j\in 2^{\naturals+1}}].
\end{equation}

\begin{remark}

  One should be careful when using sequences as above to denote K-theory classes: it is generally not clear when a K-theory class can be represented by such a sequence, and when it does, this representation is generally non-unique.

  On the other hand, since we gave an explicit construction for the class
  $[(c_ip)_{i\in\naturals}]$ in terms of (differences of) projections, here it
  is easy to describe its image under homomorphisms induced by functions at
  the level of spaces.

  For instance, \eqref{eq: shift of K0} is justified by observing that the isometry
  \[
    \begin{tikzcd}[column sep = 0pt,row sep = 0pt]
      V\colon &[-.5em] L^2(M)\otimes\ell^2(\expnat)\otimes\mathcal H
       \ar{r} &[2em] L^2(M)\otimes\ell^2(\expnat)\otimes\mathcal H;
      \\ 
      & \xi\otimes \delta_{2^i}\otimes\eta \ar[|->]{r} & \xi\otimes \delta_{2^{i+1}}\otimes\eta.
    \end{tikzcd}  
  \]
 covers the shift map, and hence $\shift_\ast$ is the homomorphism induced in K-theory by the conjugation $t\mapsto VtV^*$. The latter acts as prescribed in \eqref{eq: shift of K0} on the defining projections $(p\otimes\delta_i\otimes q_i)_{i\in \expnat}$.

  Similar arguments will be used again in the sequel. In \cref{lem: concentrate D-N} we also need to construct isometries covering non-injective maps of spaces. This is simply done by arbitrarily embedding $\mathcal H \otimes\ell^2(\naturals)$ into $\mathcal H$ to create enough space for it.
\end{remark}

The following is now easy to prove:

\begin{lemma}
  Any class $x = [(c_{2^0}p,c_{2^1}p,\ldots)]\in K_0(\Croe{\wcone^\expnat})$ as above is in the kernel of
  $\jalt$.
\end{lemma}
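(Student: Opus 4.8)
The plan is to read the claim off directly from the long exact sequence of \cref{thm:LES}, specialized to the coarse homology theory $HX_\ast = K_\ast(\Croe{\variable})$. By exactness at the middle term, $\ker(\jalt) = \image\bigl(\id+\shift_\ast\colon K_0(\Croe{\wcone^\expnat})\to K_0(\Croe{\wcone^\expnat})\bigr)$, so it suffices to exhibit $x$ in the form $(\id+\shift_\ast)(y)$ for a suitable $y\in K_0(\Croe{\wcone^\expnat})$.

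First I would solve the evident triangular recursion for the coefficients: set $a_{2^0}\coloneqq c_{2^0}$ and, for $j\geq 1$, $a_{2^j}\coloneqq c_{2^j}-a_{2^{j-1}}$, equivalently $a_{2^j}=\sum_{i=0}^j(-1)^{\,j-i}c_{2^i}$, which is a well-defined sequence of integers. Put $y\coloneqq[(a_{2^j}p)_{2^j\in\expnat}]\in K_0(\Croe{\wcone^\expnat})$, interpreting negative coefficients as formal differences as explained before the lemma. By \eqref{eq: shift of K0} we have $\shift_\ast(y)=[(0,a_{2^0}p,a_{2^1}p,\dots)]$, and since the K$_0$-class attached to a coefficient sequence is additive in that sequence — realize the two summands on orthogonal ranges in an auxiliary Hilbert-space factor (passing to a very ample module, as in the discussion preceding the lemma) and use $[P]+[Q]=[P\oplus Q]$, then reduce the signed case to the non-negative one by formal differences — we obtain
\[
(\id+\shift_\ast)(y)=\bigl[\bigl((a_{2^j}+a_{2^{j-1}})p\bigr)_{2^j\in\expnat}\bigr]
\]
with the convention $a_{2^{-1}}=0$. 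By the choice of the $a_{2^j}$ the right-hand side equals $[(c_{2^j}p)_{2^j\in\expnat}]=x$, whence $x\in\image(\id+\shift_\ast)=\ker(\jalt)$.

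The only step that is not pure bookkeeping is the additivity of the assignment sending a coefficient sequence to its K$_0$-class: one must split each $p\otimes\delta_{2^j}$ into orthogonal pieces of the prescribed ranks level by level, and then treat negative $a_{2^j}$ via formal differences, which I expect to cost a line or two. Everything else is an immediate consequence of \eqref{eq: shift of K0} together with exactness of the sequence in \cref{thm:LES}.
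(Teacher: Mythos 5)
Your proof is correct and is essentially the paper's own argument: the paper also writes the preimage as $y=\sum_{n\in\mathbb N}(-1)^n\shift_\ast^n(x)=[(c_{2^0}p,\,(c_{2^1}-c_{2^0})p,\,(c_{2^2}-c_{2^1}+c_{2^0})p,\ldots)]$, which is exactly your sequence $a_{2^j}=\sum_{i=0}^j(-1)^{j-i}c_{2^i}$, and then concludes $x=(\id+\shift_\ast)(y)\in\ker(\jalt)$ by exactness. Your explicit check of additivity of the coefficient-sequence-to-$K_0$-class assignment is a harmless elaboration of a point the paper leaves implicit.
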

\begin{proof}
  Consider the class 
  \[
  y\coloneqq\sum_{n\in\mathbb N}(-1)^n\shift_\ast^{n}(x) = [(c_{2^0}p, (c_{2^1}-c_{2^0})p, (c_{2^2}-c_{2^1}+c_{2^0})p,\ldots )]\in K_0(\wcone^\expnat)
  \]
  (the infinite series is simply intended as a formal sum, and it is seen to make sense because the RHS is well-defined).
  Then $x= (\id+\shift_\ast)(y)$, and therefore
  $\jalt(x)=0$ by exactness.
\end{proof}

From the diagram \eqref{eq: diagram prolong exactness to the right}, we read that $\jalt\colon K_0(\Croe{\wcone^\expnat})\to K_0(\Croe{\wcone})$ maps
\[
[(c_{2^{j}}p)_{2^j\in \expnat}] \longmapsto [((-1)^{j+1}c_{2^{j}}p)_{2^j\in \expnat}],
\]
where it is understood that in the right hand side every element in $\naturals\smallsetminus\expnat$ has coefficient zero.
Here we are being ever so slightly imprecise in that \eqref{eq: diagram prolong exactness to the right} does not directly specify what happens to the coefficient $c_{2^0}$. However, the fact that the quotient map $\jalt$ is well-defined means precisely that this choice does not matter.

Suppose now that $\Gamma\curvearrowright M$ is strongly ergodic, and let
$\mathfrak G^{\mathbb N} = (m)_{i\in \naturals}$ be the integral Dru\k{t}u--Nowak projection.
We note that this is equivalent to a class concentrated on the exponential level sets:

\begin{lemma}\label{lem: concentrate D-N}
  \(
[(m)_{i\in \naturals}] = [(2^jm)_{2^j\in\expnat}].
\)
\end{lemma}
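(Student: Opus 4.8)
The plan is to represent $[(m)_{i\in\naturals}]$ by the block\=/diagonal projection $P:=m\otimes\id_{\ell^2\naturals}\in\Croe{\wcone^\naturals}$ and to compare it with the concentrated projection by sliding each copy of $m$ downwards to the largest power of $2$ below its level. The essential difficulty is that this collapse is \emph{not} a controlled self\=/map of $\wcone^\naturals$: the adjacent levels $2^{j+1}-1$ and $2^{j+1}$ lie at distance $1$ but are sent to $2^{j}$ and $2^{j+1}$, which are $\sim 2^{j}$ apart. To circumvent this, I would first cut $\naturals$ into the half\=/open dyadic blocks $D_j:=\{2^j,2^j+1,\dots,2^{j+1}-1\}$, whose cardinality $\abs{D_j}=2^{j}$ is exactly the multiplicity we are aiming for, and then separate them by parity, writing $D_{\mathrm{ev}}:=\bigsqcup_{k}D_{2k}$ and $D_{\mathrm{od}}:=\bigsqcup_{k}D_{2k+1}$. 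In $\wcone^{D_{\mathrm{ev}}}$ two consecutive blocks are now separated by the gap vacated by an odd block of at least comparable diameter, so the blocks are coarsely isolated; likewise for $\wcone^{D_{\mathrm{od}}}$. These two spaces are nothing but the integer\=/level versions of the $\wcone^{I_1},\wcone^{I_2}$ of \cref{sec: Mayer--Vietoris}.

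Since $P$ commutes with the level projections, it splits as the orthogonal sum $P=\chi_{\mathrm{ev}}P\chi_{\mathrm{ev}}+\chi_{\mathrm{od}}P\chi_{\mathrm{od}}$, where $\chi_{\mathrm{ev}},\chi_{\mathrm{od}}$ project onto $L^2M\otimes\ell^2D_{\mathrm{ev}}$ and $L^2M\otimes\ell^2D_{\mathrm{od}}$; the two summands are the images of $P|_{D_{\mathrm{ev}}}\in\Croe{\wcone^{D_{\mathrm{ev}}}}$ and $P|_{D_{\mathrm{od}}}\in\Croe{\wcone^{D_{\mathrm{od}}}}$ under the inclusion\=/induced maps into $\Croe{\wcone}$, so that $[(m)_{i\in\naturals}]=[P|_{D_{\mathrm{ev}}}]+[P|_{D_{\mathrm{od}}}]$ in $K_0(\Croe{\wcone})$. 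Now, inside $\wcone^{D_{\mathrm{ev}}}$, the collapse $\down$ sending every level of $D_{2k}$ to its bottom $2^{2k}$ and the inclusion of the bottom level sets are mutually inverse coarse homotopy equivalences, exactly as in \cref{lem:coarse homotopy equivalence}; they therefore induce inverse isomorphisms on $K_0$. I would realize $\down_\ast$ by an isometry $V$ covering the (non\=/injective) map $\down$, using the room provided by a fixed embedding $\mathcal H\otimes\ell^2\naturals\hookrightarrow\mathcal H$ to send the $2^{2k}$ mutually orthogonal level\=/copies of $L^2M$ over $D_{2k}$ to $2^{2k}$ orthogonal copies over the single level $2^{2k}$. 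Then $VP|_{D_{\mathrm{ev}}}V^\ast$ is precisely the projection with $2^{2k}$ copies of $m$ at each level $2^{2k}$; as $\down$ is invertible up to coarse homotopy, $[P|_{D_{\mathrm{ev}}}]$ and this concentrated class have the same image in $K_0(\Croe{\wcone})$ along $\wcone^{2^{2\naturals}}\hookrightarrow\wcone^{D_{\mathrm{ev}}}\hookrightarrow\wcone$, namely $[(2^{2k}m)_{2^{2k}}]$; symmetrically $[P|_{D_{\mathrm{od}}}]=[(2^{2k+1}m)_{2^{2k+1}}]$.

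Adding the two contributions gives $[(m)_{i\in\naturals}]=[(2^{2k}m)_{2^{2k}}]+[(2^{2k+1}m)_{2^{2k+1}}]=[(2^jm)_{2^j\in\expnat}]$, since the even and odd powers of $2$ partition $\expnat$ and the multiplicities match on the nose. The step I expect to carry the real weight is the middle one: checking that the block\=/collapse $\down$ is a genuine coarse homotopy equivalence on $\wcone^{D_{\mathrm{ev}}}$ (this is precisely why the parity splitting, which makes the blocks coarsely isolated, is indispensable---on $\wcone^\naturals$ itself the collapse is not even controlled) and that the covering isometry $V$ implements $\down_\ast$ on $K$\=/theory. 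The one bookkeeping point to keep honest is the multiplicity count $\abs{D_j}=2^{j}$: using the half\=/open blocks $[2^j,2^{j+1})$ rather than the closed intervals $I_1,I_2$ is what avoids the endpoint double\=/counting at the shared powers $2^{\naturals+1}$, which would otherwise introduce a spurious Dru\k{t}u--Nowak\=/type correction supported on those levels.
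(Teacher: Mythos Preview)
Your argument is correct, and it resolves the same obstruction the paper does (the naive level\=/collapse $i\mapsto 2^{\lfloor\log_2 i\rfloor}$ is not controlled on $\wcone^\naturals$), but by a different device. The paper introduces an auxiliary \emph{coarse disjoint union} $Y=\bigsqcup_{n}\wcone^{[2^n,2^{n+1})}$ (which is not a subspace of $\wcone$), lifts $\mathfrak G^\naturals$ to a ``split'' class on $Y$, and then observes that the gluing map $f_0\colon Y\to\wcone$ and the collapse\=/to\=/bottom map $f_1\colon Y\to\wcone$ are both controlled and coarsely homotopic, yielding the identity in one stroke. You instead stay inside $\wcone$ by passing to the parity subspaces $\wcone^{D_{\rm ev}}$, $\wcone^{D_{\rm od}}$, on which the blocks become coarsely isolated so that the collapse \emph{is} controlled and the coarse\=/homotopy equivalences of \cref{lem:coarse homotopy equivalence} apply verbatim; then you add the even and odd contributions. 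This costs one extra additivity step but has the virtue of recycling the $I_1/I_2$ machinery already set up in \cref{sec: Mayer--Vietoris}; the paper's route is more economical because it needs only a single homotopy rather than two, and it avoids the endpoint bookkeeping you flag by working with the abstract disjoint union rather than with half\=/open intervals sitting inside $\wcone$.
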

\begin{proof}
  We split up the cone into exponential intervals and consider their coarse disjoint union
  \[
  Y \coloneqq \bigsqcup_{n\in\naturals}\wcone^{[2^n,2^{n+1})}.
  \]
  We define the class
  \[
  [\mathfrak G^\naturals_{\rm split}]\coloneqq [(m),(m,m),(m,m,m,m),\ldots]\in K_0(\Croe{Y})
  \]
  the obvious way, and note that $[\mathfrak G^\naturals]$ is the image of $[\mathfrak G^\naturals_{\rm split}]$ under the map 
  \[f_0\colon Y\to\wcone\]
  that glues the intervals back up (this map is proper and controlled).

  On the other hand, also the mapping $f_1\colon Y\to\wcone$ defined by compressing each interval to their bottom extremity is proper and controlled. Since $f_0$ and $f_1$ are clearly coarse homotopic (cf.\ \cref{lem:coarse homotopy equivalence}), they must induce the same map in K-theory. Then we are done, because $(f_1)_\ast([\mathfrak G^\naturals_{\rm split}])=[(2^jm)_{2^j\in\expnat}]$.
\end{proof}

The following is now clear.

\begin{corollary}\label{cor: vanishing II}
  Let $\Gamma\curvearrowright M$ be strongly ergodic.
  Then $[\mathfrak G^{\mathbb N}]=0$ in $K_0(\Croe{\wcone})$.
\end{corollary}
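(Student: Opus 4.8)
The plan is to deduce the corollary directly from \cref{lem: concentrate D-N} together with the kernel lemma stated just before it, feeding in the explicit formula for $\jalt$ on sequence-type classes recorded after \eqref{eq: diagram prolong exactness to the right}. First I would observe that strong ergodicity is precisely what makes the integral Dru\k{t}u--Nowak projection $\mathfrak G^{\mathbb N}=(m)_{i\in\naturals}$ lie in $\Croe{\wcone^\naturals}$, so that $m$ is an admissible choice of $p$ in the preceding two lemmas and all the auxiliary classes $[(c_{2^j}m)_{2^j\in\expnat}]$ of the relevant shape are defined.

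Then I would chain three identities in $K_0(\Croe{\wcone})$. By \cref{lem: concentrate D-N}, $[\mathfrak G^{\mathbb N}]=[(m)_{i\in\naturals}]=[(2^j m)_{2^j\in\expnat}]$, the last class being the image, under the inclusion $\wcone^\expnat\hookrightarrow\wcone$, of a class concentrated on the exponential level sets (in the proof of that lemma it appears as $(f_1)_\ast$ of the ``split'' projection, with $f_1$ compressing each interval onto its bottom level set). Next, by the description $\jalt\colon[(c_{2^j}m)_{2^j\in\expnat}]\mapsto[((-1)^{j+1}c_{2^j}m)_{2^j\in\expnat}]$, choosing the integer coefficients $c_{2^j}=(-1)^{j+1}2^j$ makes the sign square away, so
\[
  [(2^j m)_{2^j\in\expnat}]=\jalt\bigparen{[((-1)^{j+1}2^j m)_{2^j\in\expnat}]}\in K_0(\Croe{\wcone}).
\]
Finally, the class $[((-1)^{j+1}2^j m)_{2^j\in\expnat}]\in K_0(\Croe{\wcone^\expnat})$ is exactly of the form handled by the kernel lemma, hence lies in $\ker(\jalt)$; combining the three identities gives $[\mathfrak G^{\mathbb N}]=0$.

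I do not expect a genuine obstacle here --- the argument is a short bookkeeping exercise --- but the one place requiring care is the sign matching: one must check that the coefficient sequence $\jalt$ produces from $(-1)^{j+1}2^j$ is exactly the all-$2^j$ sequence coming out of \cref{lem: concentrate D-N}, and that the slight ambiguity in the description of $\jalt$ at the coefficient $c_{2^0}$ is harmless (it is, since the conclusion is only that a class vanishes, which is insensitive to that ambiguity). Accordingly I would keep the write-up to a couple of lines.
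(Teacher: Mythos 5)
Your argument is correct and is essentially the paper's own proof: both identify $[\mathfrak G^{\mathbb N}]$ with $[(2^j m)_{2^j\in\expnat}]$ via \cref{lem: concentrate D-N} and then exhibit it as $\jalt$ applied to the alternating class $[((-1)^{j+1}2^j m)_{2^j\in\expnat}]$, which vanishes under $\jalt$ by the preceding kernel lemma. Your remarks on the sign bookkeeping and the harmless ambiguity at $c_{2^0}$ are also in line with the paper's discussion.
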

\begin{proof}
  $[\mathfrak G^{\mathbb N}]= [(2^jm)_{2^j\in\expnat}]$ is the image under ${\jalt}$ of
  \([((-1)^{j+1}2^{j}m)_{2^j\in \expnat}]\).
\end{proof}

\begin{remark}
  Of course, in \cref{cor: vanishing II} there is nothing special about $[\mathfrak G^{\mathbb N}]$, the same proof applies to any class of the form $[(c_ip)_{i\in \naturals}]$.
  It is most likely possible to use \cref{thm:LES} to directly prove that all
  generalized Dru\k{t}u--Nowak projections vanish in K\=/theory. However, this
  would require various technical details to deal with projections of the form
  $m\otimes q$ where $q$ is something more complicated than
  $\id_{\ell^2\naturals}$ (e.g.\ it does not have finite propagation). In this
  case \cref{thm:vanishing} seems to be the most straightforward solution.
\end{remark}

\subsection{Failure of coarse Baum--Connes}
As a second and main application, we provide the announced counterexamples to coarse Baum--Connes in K$_1$.

\begin{proof}[Proof of \cref{thm:cBC_counter}]
By assumption, $\Gamma$ is a group with property A and $\Gamma\curvearrowright M$ a free strongly ergodic action by Lipschitz homeomorphisms.
We then know that the averaging
projection $m\in\mathcal B(L^2M)$ lies in $\Cfp{\Gamma\curvearrowright M}\cap \mathcal
K(L^2(M))$, and hence both $(m,0,m,0,\dots)$ and $(0,m,0,m,\dots)$ belong to $\Croe{\wcone^{2^\naturals}}$.
The alternating Dru\k{t}u--Nowak class
\[
  [\mathfrak G^\expnat_{\rm alt}] \coloneqq [(m,0,m,0,\dots)] - [(0,m,0,m,\dots)]
  = [(m,-m,m,-m,\ldots)]
\]
is then an element in $K_0(\Croe{\wcone^\expnat})$.

The same methods used to prove \cref{thm: old-counterexample}, also prove that
$[\mathfrak G^\expnat_{\rm alt}]$ does not belong to the image of $\mu_c\colon KX_0(\wcone^\expnat)\to K_0(\Croe{\wcone^\expnat})$.
In fact, exactly as in \cref{thm: old-counterexample}, we may consider the two traces
\[
\tau_{\rm d}, \tau_{\rm u}\colon K_0(\Croe{\wcone^\expnat})
\to \frac{\prod_{n\in\mathbb N}\mathbb R}{\bigoplus_{n\in\mathbb N}\mathbb R},
\]
which we know coincide on the image of $\mu_c$ (the existence of $\tau_{\rm u}$ is the only place where we need the property A, Lipschitz and freeness assumptions).
We then observe that 
\[
  \tau_{\rm d}([\mathfrak G^\expnat_{\rm alt}])= [(-1,\;1,-1,\;1,\ldots)]\neq 0,
\]
while $\tau_{\rm u}([\mathfrak G^\expnat_{\rm alt}])=0$ because $[\mathfrak
G^\expnat_{\rm alt}]$ is the difference of two classes represented by ghost projections.

Having established this, we wish to use $[\mathfrak G^\expnat_{\rm alt}]$ to construct an element in 
\begin{equation*}
  \ker\left(\id+\shift_*\colon 
    K_0(\Croe{\wcone^{2^\naturals}})\to 
    K(\Croe{\wcone^{2^\naturals}})\right).
\end{equation*}
Since $(\id+\shift_\ast)([\mathfrak G^\expnat_{\rm alt}])=[(m,0,0,\ldots)]$, this is easy to do:
arbitrarily pick some $x\in M$, and for every $n\in\mathbb N$ pick a rank-1 orthogonal projection $p_n\in \mathcal B(L^2M)$ such that the image of $p_n\otimes\delta_{2^n}$ is supported on the ball of radius $1$ centered at $(x,2^n)$.
This gives two further projections $(p_0,0,,p_2,0,\dots )$ and $(0,p_1,0,p_3,\dots)$ in $\Croe{\wcone^{2^\naturals}}$
and a class
\[
[p_{\rm alt}]\coloneqq [(p_0,0,p_2,0,\dots )] - [(0,p_1,0,p_3,\dots)]
\in K_0(\Croe{\wcone^{2^\naturals}}).
\]
Then $(\id+\shift_\ast)([p_{\rm alt}])=[(p_0,0,0,\ldots)]$, because for every $n\in\mathbb N_{\geq 1}$ the projections $p_{n-1}\otimes\delta_{2^{n}}$ and $p_n\otimes\delta_{2^n}$ are unitarily equivalent via a
unitary of propagation at most $3$.
Now, the image of $[\mathfrak G^\expnat_{\rm alt}]-[p_{\rm alt}]$ under $\id+\shift_*$ is represented by
\begin{equation*}
  [(m-p_0,0,0,\dots)] =0
\end{equation*}
which vanishes, because on the single slice $\wcone^{\{1\}}$ the projections $m$
and $p_0$ are unitarily equivalent via a unitary of finite propagation.

At this point we are done. The class $[p_{\rm alt}]$ is in the image of
$\mu_c$, and hence $[\mathfrak G^\expnat_{\rm alt}]-[p_{\rm alt}]$ is not.
The compatibility of  of the coarse assembly with the boundary maps of
  the Mayer--Vietoris sequences shows that we have a commutative diagram
\[
\begin{tikzcd}
  KX_1(\wcone)\ar[->>]{r}\ar{d}{\mu_c}
  &
  \ker\left(\id+\shift_*\colon 
    K_0(KX_0{\wcone^{2^\naturals}})\to 
    K(KX_0{\wcone^{2^\naturals}})\right)
  \ar{d}{\mu_c}
  \\
  K_1(\Croe{\wcone})\ar[->>]{r}
  &
  \ker\left(\id+\shift_*\colon 
    K_0(\Croe{\wcone^{2^\naturals}})\to 
    K(\Croe{\wcone^{2^\naturals}})\right).
\end{tikzcd}
\]
An immediate diagram chase then proves that any lift of the class $[\mathfrak G^\expnat_{\rm alt}]-[p_{\rm alt}]$ to $K_1(\Croe{\wcone})$ does not belong to the image of 
\[\mu_c \colon KX_1(\wcone) \longrightarrow   K_1(\Croe{\wcone}). \qedhere\]
\end{proof}

\begin{remark}
  Note that in the construction of $[p_{\rm alt}]$ we could have chosen $p_n$ to be supported on the $1$-ball around an arbitrary point $(x_n,2^n)$, and the resulting K\=/theory class would have stayed the same.
  In fact, this class is just the image of a K\=/theory class in $K_0(\Croe{2^n})$ under the embedding $2^\naturals\into
  \wcone^{2^\naturals}$ sending $2^n$ to $(x_n,2^n)$. Since $M$ is compact and connected, it is easily seen that any two such embedding are coarsely homotopic.
\end{remark}

\bibliography{biblio.bib}

\end{document}